\newtheorem{theorem}{Theorem}[section]
\newtheorem{lemma}[theorem]{Lemma}
\newtheorem{proposition}[theorem]{Proposition}
\newtheorem{corollary}[theorem]{Corollary}
\theoremstyle{definition}
 \newtheorem{definition}[theorem]{Definition}
\theoremstyle{remark}
\newtheorem{remark}[theorem]{Remark}
\numberwithin{equation}{section}
\begin{document}

\title[ Stability of Fredholm property]{stability of Fredholm property for regular operators on Hilbert $C^*$-modules}

\author[ Marzieh Forough]{M. Forough}

\address{ School of Mathematics, Institute for Research in Fundamental Sciences (IPM), P.O. Box 19395-5746, Tehran, Iran}
\email{mforough@ipm.ir, mforough86@gamil.com}



\subjclass[2010]{ 46L08, 47A53, 47B25}

\keywords{ Perturbations,  regular Fredholm operators,  Cayley transform, the gap metric,  Hilbert $C^*$-modules.}

\begin{abstract}
We study the stability of  Fredholm property for regular operators on Hilbert $C^*$-modules under some certain perturbations. We treat this problem when perturbing operators are (relatively) bounded or relatively compact.  We also consider the perturbations of regular Fredholm operators in terms of the gap metric. In particular, we prove that the space of all regular Fredholm operators on a Hilbert $C^*$-module $E$ is open in the space of all regular operators on $E$ with respect to the gap metric. As an application, we construct some continuous paths of selfadjoint regular Fredholm operators with respect to the gap metric. \\

\end{abstract} \maketitle

\section{Introduction}

In this paper, we investigate some forms of perturbations of regular Fredholm operators on Hilbert $C^*$-modules. 

As in the Hilbert  space case, one needs to consider unbounded adjointable operators on Hilbert $C^*$-modules, see \cite{Baaj}, \cite{Wor}, \cite{Kuc}, etc.  The lack of the projection theorem in Hilbert $C^*$-modules caused the theory of unbounded operators on Hilbert $C^*$-modules to be more complicated than Hilbert space case. The regularity condition was assumed for a closed densely defined operator on a Hilbert  $C^*$-module $E$ with densely defined adjoint, and this leads to have more reasonable theory like having a continuous functional calculus. 
\begin{definition}
Let $t: D(t)\subseteq E\rightarrow E$ be a closed densely defined operator with densely defined adjoint on a Hilbert $C^*$-module $E$. Then $t$ is called regular if $I+t^{*}t$ has dense range.
\end{definition}
In this paper, we denote by $R(E)$ the set of all regular operators on a Hilbert $C^*$-module $E$. We say that a regular operator $t$ is selfadjoint if $t=t^{*}$, and we denote the set of all selfadjoint regular operators on $E$ by $SR(E)$.

 Unbounded Fredholm operators on Hilbert $C^*$-modules have been studied in several papers, see \cite{Joachim}, \cite{Wahl1}, among others. Joachim in \cite{Joachim} argued that it is natural to consider unbounded Fredholm operators to describe some important invariants taking values in $K^{0}(X; A)$ and $K^{1}(X; A)$, where $X$ is a compact space and $A$ is a unital $C^*$-algebra. In \cite{Wahl1}, the notion of noncommutative spectral flow for paths of  selfadjoint regular Fredholm operators is defined and studied.
 \begin{definition} (c.f \cite{Joachim})
Let $t: D(t)\subseteq E\rightarrow E$ be a regular operator. A bounded adjointable operator $G \in L(E)$ is called a pseudo left inverse of $t$ if $Gt$ is closable and its closure $\overline{Gt}\in L(E)$ and $\overline{Gt}\equiv1$ mod$K(E)$. 

Analogously, $G \in L(E)$ is called a pseudo right inverse of $t$ if $tG$ is closable and its closure $\overline{tG}\in L(E)$ and $ \overline{tG}\equiv 1$ mod$K(E)$.

 A regular operator $t$ on a Hilbert $C^*$-module $E$ is called Fredholm if $t$ has a pseudo right inverse as well as a pseudo left inverse.
\end{definition}

The stability of Fredolm property for linear operators between Banach spaces or Hilbert spaces are studied extensively, see  \cite{G-K}, \cite{Kato1}, and  \cite{Nagy}. One basic problem here is how to define a small perturbation for unbounded operators. One rather general definition is based on the notion of relatively bounded perturbations, however, it is still restrictive, see \cite{Kato}. The most natural and general definition of smallness of a perturbation is given by the gap metric in the space of closed linear operators between Banach spaces. In \cite{Kato}, a systematic use of the gap metric was made. In this paper, we aim to study the preservation of Fredholm property for regular operators  on Hilbert $C^*$-modules. Following the Banach space case, we consider the small perturbations by notions of (relatively) bounded perturbations, (relatively) compact perturbations and the gap metric on the space of regular operators. 
\begin{theorem} \label{main-a}
 $(a)$ Let $t: D(t)\subseteq E\rightarrow E$ be a regular Fredholm operator on a Hilbert $C^*$-module $E$. Then there exists  $\varepsilon > 0$ with the following property:

$(i)$  $D+t$ is a regular Fredholm operator for any selfadjoint bounded operator $D$ on $E$ satisfying $\|D\|\leq \varepsilon$. \\

$(ii)$ If $s: D(s)\subseteq E\rightarrow E$ is a selfadjoint regular operator on $E$ satisfying $D(t) \subseteq D(s)$ and 
$$\|s(x)\|\leq \varepsilon (\|(t+s)(x)\|+\|x\|)$$
for all $x \in D(t)$. Then $t+s$ is a selfadjoint regular Fredholm operator.\\
$(iii)$ If $s$ is a regular operator on $E$ with $g(s,t)\leq \varepsilon$ then $s$ is Fredholm, where $g$ is the gap metric.\\
$(b)$  Let $t$ and $s$ be  selfadjoint regular operators on a countably generated Hilbert $C^*$-module $E$ over unital $C^*$-algebra $A$. Suppose that $t$ is Fredholm and $s$ is $t$-compact, then $s+t$ is a selfadjoint regular Fredholm operator. \\\end{theorem}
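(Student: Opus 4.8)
The plan is to transport everything to the \emph{bounded transform} $z_t:=t(1+t^{*}t)^{-1/2}\in L(E)$, using three standard facts: $t\mapsto z_t$ is a bijection of $R(E)$ onto $\{z\in L(E):\|z\|\le 1,\ 1-z^{*}z\text{ has dense range}\}$; $1-z_t^{*}z_t=(1+t^{*}t)^{-1}$ and $z_tz_t^{*}=tt^{*}(1+tt^{*})^{-1}=1-(1+tt^{*})^{-1}$; and a regular operator $t$ is Fredholm (in the sense above) if and only if $z_t$ is a Fredholm element of $L(E)$, i.e.\ invertible modulo $K(E)$ (see \cite{Joachim}). Since $L(E)/K(E)$ is a $C^{*}$-algebra, the class of $z_t$ is invertible there exactly when $z_t^{*}z_t$ and $z_tz_t^{*}$ both are, so
\[
t\in R(E)\ \text{is Fredholm}\ \iff\ z_t^{*}z_t=1-(1+t^{*}t)^{-1}\ \text{and}\ z_tz_t^{*}=1-(1+tt^{*})^{-1}\ \text{are Fredholm in }L(E).
\]
For $t$ regular the graph $G(t)$ is orthogonally complemented in $E\oplus E$ (indeed $E\oplus E=G(t)\oplus VG(t^{*})$ with $V(x,y)=(-y,x)$), so $P_{G(t)}$ exists and $g(s,t)=\|P_{G(s)}-P_{G(t)}\|$; writing $W=\bigl(\begin{smallmatrix}(1+t^{*}t)^{-1/2}\\ z_t\end{smallmatrix}\bigr)$, an isometry $E\to E\oplus E$ with range $G(t)$ and $WW^{*}=P_{G(t)}$, one reads off that the $(1,1)$- and $(2,2)$-entries of $P_{G(t)}$ are $(1+t^{*}t)^{-1}$ and $z_tz_t^{*}$. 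Hence $t\mapsto z_t^{*}z_t$ and $t\mapsto z_tz_t^{*}$ are norm-continuous from $(R(E),g)$ to $L(E)$, and as the Fredholm elements form an open subset of $L(E)$, the set of regular Fredholm operators is open in $(R(E),g)$. This is (a)(iii), with $\varepsilon$ the gap-distance from $t$ to the complement of that open set. I expect this reduction to be the only genuinely delicate point, because the gap metric is strictly coarser than the Riesz metric $\|z_s-z_t\|$: one cannot just perturb $z_t$ in norm, and it is essential to read Fredholmness off the entries of $P_{G(t)}$, which is exactly what the $C^{*}$-algebra fact ``$a$ invertible $\iff a^{*}a,\,aa^{*}$ invertible'' makes possible.

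For (a)(i), if $D=D^{*}\in L(E)$ then $D+t$ is again regular, and $G(D+t)=\Phi_D\,G(t)$ with $\Phi_D=\bigl(\begin{smallmatrix}1&0\\ D&1\end{smallmatrix}\bigr)\in L(E\oplus E)$ invertible and $\|\Phi_D-1\|=\|\Phi_D^{-1}-1\|=\|D\|$; the standard estimate for the projection onto the image of a complemented submodule under an invertible operator near $1$ gives $g(D+t,t)\le c\|D\|$ for small $\|D\|$, so any $\varepsilon$ smaller than $1$ and than (the radius from (iii))$/c$ works. For (a)(ii), where $t=t^{*}$ as the conclusion demands, the inequality $\|sx\|\le\varepsilon(\|(t+s)x\|+\|x\|)\le\varepsilon\|tx\|+\varepsilon\|sx\|+\varepsilon\|x\|$ rearranges to $\|sx\|\le\frac{\varepsilon}{1-\varepsilon}(\|tx\|+\|x\|)$, so $s$ is $t$-bounded with relative bound $<1$ for $\varepsilon<\tfrac12$, and by the Kato--Rellich theorem for regular operators $t+s$ is selfadjoint and regular with $D(t+s)=D(t)$. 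On $G(t)$ the map $(x,tx)\mapsto(x,(t+s)x)$ differs from the inclusion $G(t)\hookrightarrow E\oplus E$ by $(x,tx)\mapsto(0,sx)$, of norm $\le\frac{\sqrt2\,\varepsilon}{1-\varepsilon}$; on $G(t+s)$ the map $(x,(t+s)x)\mapsto(x,tx)$ differs from the inclusion by $(x,(t+s)x)\mapsto(0,-sx)$, of norm $\le\sqrt2\,\varepsilon$ directly from the hypothesis. These one-sided bounds give $g(t+s,t)\le C\varepsilon$, so for $\varepsilon$ small (iii) makes $t+s$ Fredholm, hence a selfadjoint regular Fredholm operator.

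For (b), $t$-compactness of $s$ forces $s$ to be $t$-bounded with relative bound $0$: since $(1+t^{2})^{1/2}(\lambda^{2}+t^{2})^{-1/2}\to0$ strongly as $\lambda\to\infty$ while $s(1+t^{2})^{-1/2}\in K(E)$, one gets $s(\lambda^{2}+t^{2})^{-1/2}=\bigl(s(1+t^{2})^{-1/2}\bigr)\cdot(1+t^{2})^{1/2}(\lambda^{2}+t^{2})^{-1/2}\to0$ in norm, and the usual estimate then yields relative bound $0$; so by Kato--Rellich $t+s$ is again selfadjoint and regular. For Fredholmness I would use the Cayley transform: for $a=a^{*}\in SR(E)$ put $u_a:=(a-i)(a+i)^{-1}\in L(E)$, a unitary with $1+u_a=2a(a+i)^{-1}$, and since $z_a=\tfrac12(1+u_a)v$ for a unitary $v\in L(E)$, $a$ is Fredholm if and only if $1+u_a$ is a Fredholm element of $L(E)$. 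The resolvent identity gives
\[
u_{t+s}-u_t=-2i\bigl[(t+s+i)^{-1}-(t+i)^{-1}\bigr]=2i\,(t+s+i)^{-1}\,s\,(t+i)^{-1},
\]
and since $(t+i)^{-1}=(1+t^{2})^{-1/2}w$ for a unitary $w\in L(E)$, we have $s(t+i)^{-1}=\bigl(s(1+t^{2})^{-1/2}\bigr)w\in K(E)$; left multiplication by the bounded operator $(t+s+i)^{-1}$ keeps it in $K(E)$, so $u_{t+s}-u_t\in K(E)$. Hence $1+u_{t+s}=(1+u_t)+(u_{t+s}-u_t)$ is a Fredholm element plus a compact one, so it is Fredholm, and therefore $t+s$ is a selfadjoint regular Fredholm operator. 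The substantive work in (b) is the compactness identity for $u_{t+s}-u_t$, together with the appeal to a Kato--Rellich theorem for regular operators to guarantee that $t+s$ is selfadjoint and regular in the first place — this is presumably also where countable generation of $E$ over the unital algebra $A$ enters.
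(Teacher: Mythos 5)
Your proposal is correct, but it follows a genuinely different route from the paper's for part (a). The paper's engine is the Cayley transform: it first proves that a selfadjoint regular $t$ is Fredholm iff $I+C_t$ is Fredholm, derives the perturbation identities $C_{t+D}-C_t=(I-C_t)D(D+t+i)^{-1}$ and its unbounded analogue, estimates these directly to get (i) and (ii), uses the inequality $\tfrac14\|C_t-C_s\|\le d(t,s)\le\tfrac12\|C_t-C_s\|$ from \cite{Gap} for (iii), and removes selfadjointness throughout via the doubling $\tilde t=\begin{pmatrix}0&t^*\\ t&0\end{pmatrix}$. You instead read Fredholmness off the diagonal corners $R_t=1-F_t^*F_t$ and $F_tF_t^*=1-R_{t^*}$ of the graph projection, combined with the $C^*$-algebra fact that a class in $L(E)/K(E)$ is invertible iff $a^*a$ and $aa^*$ are: this gives (iii) at once for arbitrary (not necessarily selfadjoint) regular operators, with no Cayley transform, no doubling trick and no use of the $d$-versus-$\|C_t-C_s\|$ comparison, and you then fold (i) and (ii) into (iii) by gap estimates, whereas the paper treats them separately; that is a real simplification of the architecture, while the paper's route yields explicit Cayley identities it reuses for the continuity of paths. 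Two steps you assert need a module-safe line each, though both go through: the estimate $g(D+t,t)\le c\|D\|$ in (i) follows from the explicit formula $P_{\Phi_D G(t)}=\Phi_D P\,(P\Phi_D^*\Phi_D P+1-P)^{-1}P\Phi_D^*$ with $P=P_{G(t)}$; and in (ii) the passage from your two one-sided bounds to $\|P_{G(t+s)}-P_{G(t)}\|$ should use $P_M-P_N=P_M(1-P_N)-(1-P_M)P_N$, since the Hilbert-space ``maximum of the two deflections'' formula is unavailable (also your $\sqrt2$'s should be $2$'s, because $\|(x,y)\|^2\ne\|x\|^2+\|y\|^2$ in a module --- harmless). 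For (b) your Fredholm step coincides with the paper's (the same resolvent identity exhibits the Cayley transform of $t+s$ as a compact perturbation of that of $t$, via $s(t+i)^{-1}\in K(E)$); the difference is upstream: the paper obtains selfadjoint regularity of $t+s$ from Lemma A.4 of \cite{Wahl1}, which is where countable generation and unitality enter, while you prove relative bound $0$ and invoke Kato--Rellich. Note that your norm convergence of $\bigl(s(1+t^2)^{-1/2}\bigr)(1+t^2)^{1/2}(\lambda^2+t^2)^{-1/2}$ is not ``compact times strongly null'' in the topological sense (elements of $K(E)$ are not compact maps), but it does hold by approximating $s(1+t^2)^{-1/2}$ in norm by finite-rank operators and using $\theta_{x,y}B_\lambda=\theta_{x,B_\lambda y}$ with $B_\lambda$ selfadjoint and uniformly bounded; with that observation your argument for (b) is valid and in fact does not need the countably-generated/unital hypotheses.
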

Studying the perturbations of unbounded Fredholm operators on Banach spaces or Hilbert spaces makes a heavy use of the fact that such operators have closed range, finite dimensional kernel and cokernel.  However, a regular Fredholm operator does not have necessarily a closed range or finite dimensional kernel and  cokernel.  Hence we can not employ the ideas and techniques applied in studying the perturbations of unbounded closed densely defined Fredholm operators on Hilbert spaces in our problem concerning the perturbations of regular Fredholm operators on Hilbert $C^*$-modules. To tackle our problem, we employ the Cayley transforms for  selfadjoint regular operators, and obtain a necessary and sufficient condition for a selfadjoint regular operator to be Fredholm in terms of its Cayley transform. 

\begin{theorem} 
Let $t$ be a selfadjoint regular operator on a Hilbert $C^*$-module $E$. Then $t$ is Fredholm if and only if bounded adjointable operator $I+C_{t}$ is Fredholm.
\end{theorem}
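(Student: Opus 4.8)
The plan is to reduce the statement to an explicit factorisation of $I+C_{t}$ through the resolvent of $t$. Recall that for a selfadjoint regular operator $t$ the resolvents $(t\pm i)^{-1}$ lie in $L(E)$ and that the Cayley transform $C_{t}=(t-i)(t+i)^{-1}$ is a unitary in $L(E)$ (so that $1-C_{t}=2i(t+i)^{-1}$ has dense range). Writing $R:=(t+i)^{-1}\in L(E)$, the identities I would record at the start are $tR=((t+i)-i)R=1-iR$ as an operator on all of $E$ (legitimate because $Rx\in D(t)$ for every $x\in E$), $Rt\subseteq 1-iR$ on $D(t)$, and hence $I+C_{t}=1+(1-2iR)=2-2iR=2tR$. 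I will also use that ``$I+C_{t}$ is Fredholm'' means $I+C_{t}$ is invertible in $L(E)/K(E)$, for which it is enough to produce both a left and a right parametrix in $L(E)$.

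For ``$I+C_{t}$ Fredholm $\Rightarrow t$ Fredholm'' I would take $B\in L(E)$ with $B(I+C_{t})-1$ and $(I+C_{t})B-1$ in $K(E)$, and set $G:=2BR$ and $G':=2RB$. Since $Rt\subseteq 1-iR$, the densely defined operator $Gt=2BRt$ is the restriction to $D(t)$ of the bounded operator $2B(1-iR)=B(I+C_{t})$, so $\overline{Gt}=B(I+C_{t})\in L(E)$ with $\overline{Gt}\equiv 1\bmod K(E)$; and since $Rx\in D(t)$ always, $tG'=2tRB=2(1-iR)B=(I+C_{t})B$ is everywhere defined, so $\overline{tG'}=(I+C_{t})B\in L(E)$ with $\overline{tG'}\equiv 1\bmod K(E)$. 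Thus $G$ is a pseudo left inverse and $G'$ a pseudo right inverse of $t$.

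For the converse, let $G$ be a pseudo left inverse and $G'$ a pseudo right inverse of $t$ and put $P:=\overline{Gt}+iG\in L(E)$ and $Q:=\overline{tG'}+iG'\in L(E)$. I would check that $P$ is the closure of $G(t+i)$ and $Q$ the closure of $(t+i)G'$; for $Q$ this uses that $\{x\in E:G'x\in D(t)\}$, the domain of $tG'$, is dense, which follows from $\overline{tG'}$ being an everywhere defined operator. From $(t+i)Rx=x$ one gets $PR=G$, and from $tR=1-iR$ together with that density one gets $R\,\overline{tG'}=(1-iR)G'$ and hence $RQ=G'$. Then $\tfrac12 P(I+C_{t})=P\,tR=P(1-iR)=P-iPR=P-iG=\overline{Gt}\equiv 1\bmod K(E)$ and $(I+C_{t})\tfrac12 Q=tR\,Q=(1-iR)Q=Q-iRQ=Q-iG'=\overline{tG'}\equiv 1\bmod K(E)$, so $\tfrac12 P$ and $\tfrac12 Q$ are a left and a right parametrix for $I+C_{t}$, which is therefore Fredholm.

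The algebra here is short; the part that needs care is the bookkeeping with domains and closures that legitimises the formal manipulations — in particular $\overline{Gt}\,R=\overline{GtR}$, $R\,\overline{tG'}=\overline{RtG'}$, and the identifications $P=\overline{G(t+i)}$, $Q=\overline{(t+i)G'}$. Three facts make these go through: (i) $R=(t+i)^{-1}$ maps $E$ into $D(t)$, so that composites such as $tR$ and $RtG'$ are globally defined; (ii) a densely defined operator whose closure is a bounded adjointable operator must already have dense domain (applied to $tG'$); and (iii) the resolvent identity $t(t+i)^{-1}=1-i(t+i)^{-1}$. I would also be careful to justify at the outset that $C_{t}$ is a well defined unitary in $L(E)$ and $(t+i)^{-1}\in L(E)$, since the whole argument rests on the factorisation $I+C_{t}=2t(t+i)^{-1}$.
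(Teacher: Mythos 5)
Your proof is correct, but it takes a genuinely different route from the paper. The paper does not work with the definition of Fredholmness via pseudo-inverses at all: it invokes Joachim's criterion that a regular operator $t$ is Fredholm if and only if its bounded transform $F_{t}=tQ_{t}$ is Fredholm, establishes the factorisation $I+C_{t}=2F_{t}(F_{t}-iQ_{t})$, shows that $F_{t}-iQ_{t}$ is a unitary (isometry with range $\mathrm{ran}(t-i)=E$), and then concludes by the stability of Fredholmness of bounded adjointable operators under multiplication by invertibles (Proposition 1.4(iii)). You instead argue directly from the paper's definition of a regular Fredholm operator, using the resolvent factorisation $I+C_{t}=2t(t+i)^{-1}$: from a parametrix $B$ of $I+C_{t}$ you manufacture the pseudo left and right inverses $2BR$ and $2RB$, and conversely from pseudo-inverses $G,G'$ you build the parametrices $\tfrac12(\overline{Gt}+iG)$ and $\tfrac12(\overline{tG'}+iG')$, with the domain bookkeeping (density of $D(tG')$ because $\overline{tG'}\in L(E)$, the inclusions $Rt\subseteq 1-iR$, and the identities $PR=G$, $RQ=G'$) handled correctly; your reading of ``Fredholm'' for bounded adjointable operators as invertibility in $L(E)/K(E)$ is consistent with the paper's conventions, since a left and a right inverse modulo $K(E)$ give invertibility in the quotient $C^{*}$-algebra. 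What each approach buys: the paper's argument is shorter and stays entirely within bounded operator identities once Joachim's lemma is granted, while yours is more self-contained and elementary — it needs neither the bounded transform criterion nor the unitarity of $F_{t}-iQ_{t}$, and it produces explicit pseudo-inverses — at the price of the careful closure/domain analysis you flag. The two are compatible, since $F_{t}-iQ_{t}=(t-i)Q_{t}$ and $2F_{t}(F_{t}-iQ_{t})=2t(t+i)^{-1}$, so your factorisation is the paper's with the unitary absorbed into the resolvent.
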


Then we determine an explicit relation between the Cayley transform of a selfadjoint regular operator and the Cayley transform of its perturbed operator. This enables us to translate our problem in unbounded case to a problem in bounded adjointable case and use the results available in the stability of Fredholm property for bounded adjointable operators.

 In the following proposition, we collect some results from \cite{Exel} on the stability of Fredholm property for bounded adjointable operators on Hilbert $C^*$-modules. 
 
\begin{proposition}\label{basic}(\cite{Exel})
 Let $A$ be a bounded adjointable Fredholm operator on a Hilbert $C^{*}$-module $E$, then the followings hold.\\
$(i)$ There is  $\varepsilon > 0$ such that any bounded adjointable operator $D$ satisfying $\|A-D\|\leq 
\varepsilon $ is Fredholm.\\
$(ii)$ If $D$ is a bounded adjointable operator on $E$ such that $D-A$ is a compact operator on $E$, then $D$ is Fredholm.\\
$(iii)$ If bounded adjointable operators $U$ and $V$ on $E$ are invertible, then $UAV$ is Fredholm.
\end{proposition}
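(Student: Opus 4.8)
The plan is to reduce all three statements to the Atkinson-type characterization of Fredholm operators in the bounded adjointable setting: a bounded adjointable operator $A\in L(E)$ is Fredholm if and only if its image $\pi(A)$ in the quotient $C^*$-algebra $L(E)/K(E)$ is invertible, where $\pi\colon L(E)\to L(E)/K(E)$ is the canonical quotient map (recall $K(E)$ is a closed two-sided ideal in the unital $C^*$-algebra $L(E)$, so the quotient is again a unital $C^*$-algebra; in the degenerate case $K(E)=L(E)$ the quotient is $0$ and all assertions below are vacuous). I would first record this equivalence directly from the definition of Fredholm operator. If $A$ is Fredholm, it admits a pseudo-left inverse $G$ and a pseudo-right inverse $H$; since $A$ is bounded no closure issue arises, so $GA\equiv 1$ and $AH\equiv 1 \bmod K(E)$, i.e. $\pi(G)\pi(A)=1=\pi(A)\pi(H)$ in $L(E)/K(E)$. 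An element of a unital ring with both a left and a right inverse is invertible and the two one-sided inverses coincide, so $\pi(A)$ is invertible with $\pi(G)=\pi(H)=\pi(A)^{-1}$. Conversely, if $\pi(A)$ is invertible, pick any lift $B\in L(E)$ with $\pi(B)=\pi(A)^{-1}$; then $BA\equiv 1$ and $AB\equiv 1 \bmod K(E)$, so $B$ is at once a pseudo-left and pseudo-right inverse of $A$, and $A$ is Fredholm.

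Granting this equivalence, the three parts are routine. For $(i)$, the group of invertibles of the unital Banach algebra $L(E)/K(E)$ is open: any $b$ with $\|b-\pi(A)\|<\|\pi(A)^{-1}\|^{-1}$ is invertible. Choose $\varepsilon$ with $0<\varepsilon<\|\pi(A)^{-1}\|^{-1}$. Since $\pi$ is norm-decreasing, $\|A-D\|\le\varepsilon$ gives $\|\pi(D)-\pi(A)\|\le\varepsilon<\|\pi(A)^{-1}\|^{-1}$, so $\pi(D)$ is invertible and $D$ is Fredholm. For $(ii)$, if $D-A\in K(E)$ then $\pi(D)=\pi(A)$, which is invertible, so $D$ is Fredholm. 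For $(iii)$, $\pi$ is a unital homomorphism, so $\pi(U)$ and $\pi(V)$ are invertible whenever $U$ and $V$ are invertible in $L(E)$; hence $\pi(UAV)=\pi(U)\pi(A)\pi(V)$ is a product of invertibles and therefore invertible, giving that $UAV$ is Fredholm.

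The only substantive ingredient is the Atkinson-type equivalence, and even this is essentially an unwinding of the definition together with elementary ring theory; everything afterwards is standard Banach- and $C^*$-algebra manipulation in $L(E)/K(E)$. I therefore expect no real obstacle, the only mild care being to confirm that $L(E)/K(E)$ is a unital $C^*$-algebra and to dispose of the trivial case $K(E)=L(E)$. Since all of these facts are established in \cite{Exel}, the proposition may simply be quoted from there, and the argument above records why each clause holds.
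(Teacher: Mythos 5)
Your proof is correct, and it follows the same route the paper implicitly relies on: the paper gives no argument of its own (it simply quotes the result from Exel), and the standard proof is exactly your Atkinson-type reduction, namely that with this paper's definition a bounded adjointable $A$ is Fredholm iff $\pi(A)$ is invertible in the unital $C^*$-algebra $L(E)/K(E)$, after which $(i)$--$(iii)$ are the openness of the invertible group, invariance under compact perturbation, and multiplicativity of $\pi$. The only negligible quibble is that in the degenerate case $K(E)=L(E)$ the three assertions hold trivially (every operator is then Fredholm) rather than being vacuous.
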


 
This paper is organized as follows. In section 2, we give a criterion for a selfadjoint regular operator on a Hilbert $C^*$-module to be Fredholm in terms of its Cayley transform $C_{t}$. We also present a relation between the Cayley transform of a selfadjoint regular operator and the Cayley transform of its perturbation by a selfadjoint bounded operator or a selfadjoint regular operator.\\
In section 3, first we examine the stability of Fredholm property for regular operators when perturbing operators are bounded adjointable operators. Then we turn to the case of unbounded perturbing operators. We consider the relatively bounded perturbations and relatively compact perturbations. Finally, we investigate the stability of  Fredholm property for regular operators in terms of the gap metric. In particular, we show that the space of regular Fredholm operators is open in $R(E)$ with respect to the gap metric.

\section{ Cayley transforms of selfadjoint regular  Fredholm operators}

In this section we deal with the Cayley transform of a selfadjoint regular Fredholm operator.  

Let us start with  recalling some notations regarding regular operators. Let $t: D(t)\subseteq E\rightarrow E$ be a regular operator on a Hilbert $C^*$-module $E$, then $t^*$ and $tt^*$ are regular operators. Define $Q_{t}=(1+t^{*}t)^{\frac{-1}{2}}$, $R_{t}=Q_{t}^{2}$ and $F_{t}=tQ_{t}$, then $Q_{t}$ and $F_{t}$ are bounded adjointable operators and $D(t)=ran(Q_{t})$ (c.f \cite{Lance}). The operator $F_{t}$ is called the bounded transform of $t$. We recall from \cite{Joachim} that a regular operator $t$ is Fredholm if and only if its bounded transform $F_{t}$ is Fredholm.

Suppose that $t$ is a selfadjoint regular operator then by Lemma 9.7 and Lemma 9.8 of \cite{Lance}, $t+i$ and $t-i$ are bijections. Define the Cayley transform of $t$, denoted by $C_{t}$, by 
$C_{t}:= (t-i)(t+i)^{-1}$. Then $C_{t}$ is a selfadjoint bounded operator.
Furthermore,  by Theorem 10.4 of \cite{Lance}, the Cayley transform defines a bijection from $SR(E)$ onto 
$$\{U \in L(E): U \ \ is \ \ unitary; I-U \ \ has \ \ dense \ \ range\}.$$
In the next  theorem, we give a criterion for a selfadjoint regular operator to be Fredholm in terms of its Cayley transform. This characterization is essential to obtain our main results.

\begin{theorem}\label{main1}
 Let $t$ be a selfadjoint regular operator on a Hilbert $C^*$-module $E$. Then $t$ is Fredholm if and only if $I+C_{t}$ is Fredholm.
 \end{theorem}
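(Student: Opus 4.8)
The key is to relate the Cayley transform $C_t = (t-i)(t+i)^{-1}$ to the bounded transform $F_t = tQ_t$, since the excerpt already records that $t$ is Fredholm if and only if $F_t$ is Fredholm. Writing $x \in E$ as $x = (t+i)y$ for a unique $y \in D(t)$ (using that $t+i$ is a bijection), we get $(I+C_t)x = (t+i)y + (t-i)y = 2ty$, while $(I-C_t)x = 2iy$. Hence on the dense subspace $\operatorname{ran}(I-C_t) = D(t)$ one has the formal identity $(I+C_t) = -i\, t\,(I - C_t)$, i.e. $t = i(I+C_t)(I-C_t)^{-1}$, the inverse Cayley transform. The first step is to turn this into a clean operator identity connecting $I + C_t$ to $F_t = tQ_t = t(1+t^2)^{-1/2}$.

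**Main steps.** First I would compute $1 + t^2$ in terms of the Cayley transform: from $t = i(I+C_t)(I-C_t)^{-1}$ and $C_t$ selfadjoint one finds $1 + t^2 = 2(I - C_t)^{-1}(I-C_t)^{-1*}\cdot(\text{something})$; more carefully, using $(t+i)(t-i) = 1+t^2$ and $C_t = (t-i)(t+i)^{-1}$, one obtains $1 + t^2 = (t+i)(t+i)^* $-type expressions. The cleanest route: let $U = C_t$, so $U$ is unitary and $I - U$ has dense range. One checks directly that $Q_t = (1+t^2)^{-1/2}$ and the operator $\tfrac{1}{\sqrt 2}(I-U)$ have the same range $D(t)$, and that there is a unitary $W$ (a phase, built from the polar decomposition of $I-U$) with $\tfrac{1}{\sqrt2}(I - U) = W Q_t$; this is essentially because $(I-U)^*(I-U) = 2 - U - U^* = 2R_t \cdot(\ldots)$ can be identified with a positive operator unitarily equivalent to $2 R_t = 2 Q_t^2$. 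Granting this, $F_t = t Q_t = t W^* \cdot \tfrac{1}{\sqrt2}(I-U) W$, and since $t(I-U) = t(I - C_t)$ agrees on the dense domain with $-i(I + C_t)$ up to the scalar computed above, one arrives at an identity of the form $F_t = V_1 (I + C_t) V_2$ for invertible (indeed unitary) $V_1, V_2 \in L(E)$. Then Proposition~\ref{basic}(iii) immediately gives that $F_t$ is Fredholm $\iff$ $I + C_t$ is Fredholm, and combining with ``$t$ Fredholm $\iff F_t$ Fredholm'' finishes the proof.

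**The main obstacle.** The delicate point is justifying the unitary equivalence between $\tfrac{1}{\sqrt2}(I - C_t)$ (or $(I-C_t)^*(I-C_t)$) and $Q_t$ (resp. $2Q_t^2$) as genuine bounded adjointable operators on the Hilbert $C^*$-module, rather than as merely densely-defined formal manipulations; in Hilbert $C^*$-module land one cannot invoke the spectral theorem for unbounded operators as freely as over a Hilbert space, so one must lean on the continuous functional calculus for the selfadjoint regular operator $t$ (equivalently, for the unitary $C_t$ via $f(t) = (f\circ c^{-1})(C_t)$ where $c$ is the Cayley map $\lambda \mapsto (\lambda-i)/(\lambda+i)$) to make sense of $(1+t^2)^{-1/2}$ and its relation to $I - C_t$. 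Once the functional-calculus bookkeeping is set up — writing both $Q_t$ and $I - C_t$ as continuous functions of $C_t$ and comparing — the equivalence becomes a statement about the two continuous functions $u \mapsto \tfrac{1}{\sqrt2}(1-u)$ and $u \mapsto |1 - c^{-1}(u)|^{\ldots}$ agreeing up to a continuous unitary-valued factor on the spectrum, which is routine. After that hurdle, everything reduces to Proposition~\ref{basic}(iii) and is essentially bookkeeping.
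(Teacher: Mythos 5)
Your overall strategy (reduce to ``$F_t$ Fredholm $\iff I+C_t$ Fredholm'' via an identity $F_t = V_1(I+C_t)V_2$ with $V_1,V_2$ invertible, then quote Proposition~\ref{basic}(iii)) is exactly the right idea and is what the paper does. But the step you yourself flag as the main obstacle is a genuine gap, and the repair you propose does not work as stated. You need a unitary $W$ with $I-C_t = 2WQ_t$ (note the constant: $(I-C_t)^*(I-C_t)=4R_t$ exactly, so the factor is $\tfrac12$, not $\tfrac1{\sqrt2}$). You propose to get $W$ either from the polar decomposition of $I-C_t$ or from continuous functional calculus. Neither argument is available: in a Hilbert $C^*$-module an injective adjointable operator with dense range need not admit a polar decomposition with adjointable (let alone unitary) phase, and the functional-calculus route fails because the phase function $u\mapsto (1-u)/|1-u|$ is \emph{not} continuous at $u=1$, while $1$ lies in the spectrum of $C_t$ precisely when $I-C_t$ is non-invertible, i.e.\ in the typical case of an unbounded $t$ --- so ``agreeing up to a continuous unitary-valued factor on the spectrum'' is false exactly where it matters. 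There is also a smaller wiring problem in your final identity: as written you have $F_t = tW^*\cdot\tfrac1{\sqrt2}(I-U)W$, with $W^*$ wedged between $t$ and $I-U$, so the key substitution $t(I-C_t)=-i(I+C_t)$ cannot be applied unless you also prove that $W$ commutes with $t$ (maps $D(t)$ to $D(t)$, etc.), which again presupposes knowing what $W$ is.

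The gap can be closed, and this is in effect what the paper does: the required ``phase'' exists and can be written down explicitly in terms of the bounded transform, namely $W = i(F_t-iQ_t)$. The paper bypasses polar decomposition and functional calculus entirely by a direct resolvent computation, $(t\pm i)^{-1}=tR_t\mp iR_t$, giving $2tR_t=(t-i)^{-1}+(t+i)^{-1}$ and hence the operator identity $I+C_t = 2F_t(F_t-iQ_t)$; then $F_t-iQ_t$ is shown to be an isometry from the algebraic relations $F_t^*F_t+Q_t^2=I$ and $F_tQ_t=Q_tF_t$ (Lance (10.3), (10.9)), and surjective because its range is $\operatorname{ran}(t-i)=E$, hence unitary, after which Proposition~\ref{basic}(iii) finishes the proof. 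So to salvage your write-up you should replace the polar-decomposition/functional-calculus paragraph by an explicit construction of the unitary factor (either verify $I-C_t=2\,i(F_t-iQ_t)Q_t$ directly, or derive the paper's identity $I+C_t=2F_t(F_t-iQ_t)$); without that, the proof is incomplete at its central step. (A cosmetic point: $C_t$ is unitary, not selfadjoint, as you correctly use later.)
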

\begin{proof}
 Thanks to Lemma 2.2 of \cite{Joachim}, it suffices to prove that $F_{t}$ is Fredholm if and only if $I+C_{t}$ is Fredholm. To see this, first note that

$$(t-i)^{-1}=(t+i)(t^{2}+1)^{-1}=tR_{t}+iR_{t},$$
$$ (t+i)^{-1}=(t-i)(t^{2}+1)^{-1}=tR_{t}-iR_{t}.$$
 These identities imply that
$$2tR_{t}=(t-i)^{-1}+(t+i)^{-1}.$$
Using above identity together with  $Q_{t}tQ_{t}=tR_{t}$ ((10.9) of \cite{Lance}), we can get

$$I+C_{t}=I+(t-i)(t+i)^{-1}=$$
$$(t-i)((t-i)^{-1}+(t+i)^{-1})=$$
$$2(t-i)tR_{t}=2t^{2}R_{t}-2itR_{t}$$
$$2tQ_{t}tQ_{t}-2itQ_{t}Q_{t}=$$
\begin{equation}\label{a}
2F_{t}(F_{t}-iQ_{t}).
\end{equation}\label{main11}
Observe that (10.3) and (10.9) of \cite{Lance}
shows that $F_{t}-iQ_{t}$ is isometry. Moreover, $ran(F_{t}-iQ_{t})=ran(t-i)$ since $ran(Q_{t})=D(t)$. Now, Lemma 9.8 of \cite{Lance} yields that $F_{t}-iQ_{t}$ is surjective, and so by Theorem 3.5 of \cite{Lance}, $F_{t}-iQ_{t}$ is unitary. Therefore, by Proposition \eqref{basic}, identity $I+C_{t}= 2F_{t}(F_{t}-iQ_{t})$ shows that $F_{t}$ is Fredholm if and only if $I+C_{t}$ is Fredholm.
\end{proof}
We remark here that a similar criterion for a selfadjoint closed densely defined operators on a Hilbert space to be Fredholm is obtained in \cite{Phillips}. However, the proof given in \cite{Phillips} is different with our proof for the case of selfadjoint regular operators on Hilbert $C^*$-modules.

 As an application of Theorem \ref{main1}, we can obtain a sufficient condition for a selfadjoint regular operator to be Fredholm. This condition is well-known among experts, however we give other proof based on Theorem \ref{main1}.
 
\begin{corollary}\label{resolvent}
Assume that $t$ is a selfadjoint regular operator on a Hilbert $C^*$-module $E$ such that $(t+i)^{-1}$ is compact. Then $t$ is Fredholm. 
\end{corollary}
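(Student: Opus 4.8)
The plan is to reduce the statement to Theorem \ref{main1} by showing that compactness of $(t+i)^{-1}$ forces $I+C_t$ to be Fredholm, in fact invertible modulo compacts with a compact "error" coming entirely from the resolvent. Recall from the proof of Theorem \ref{main1} the identity
\begin{equation*}
I+C_t = (t-i)\big((t-i)^{-1}+(t+i)^{-1}\big) = 2(t-i)tR_t,
\end{equation*}
which was rewritten there as $I+C_t = 2F_t(F_t-iQ_t)$. A cleaner route for this corollary is to observe directly that
\begin{equation*}
C_t = (t-i)(t+i)^{-1} = \big((t+i)-2i\big)(t+i)^{-1} = I - 2i(t+i)^{-1},
\end{equation*}
so that $I - C_t = 2i(t+i)^{-1}$. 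Hence $I-C_t$ is compact by hypothesis. Then $I+C_t = 2I - (I-C_t) = 2I + K$ where $K = -2i(t+i)^{-1} \cdot(-1) = 2i(t+i)^{-1}$ is compact — more precisely $I+C_t = 2I - 2i(t+i)^{-1}$, a compact perturbation of the invertible operator $2I$.

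By part $(ii)$ of Proposition \ref{basic} (with $A = 2I$, which is trivially Fredholm since invertible, and $D = I+C_t$, so that $D-A = -2i(t+i)^{-1}$ is compact), we conclude that $I+C_t$ is a Fredholm operator. Theorem \ref{main1} then immediately gives that $t$ is Fredholm, completing the proof.

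The only points requiring care are bookkeeping ones: first, that $(t+i)^{-1}$ is genuinely a bounded adjointable operator on $E$ — this is exactly the content of Lemma 9.7 and Lemma 9.8 of \cite{Lance} already invoked in defining $C_t$, so the compactness hypothesis is meaningful; and second, the algebraic manipulation $(t-i)(t+i)^{-1} = I - 2i(t+i)^{-1}$, which is valid on all of $E$ because $(t+i)^{-1}$ maps $E$ onto $D(t) = D(t-i)$ and $t-i = (t+i) - 2i$ as operators on $D(t)$. I do not anticipate a real obstacle here; the substance of the corollary is entirely carried by Theorem \ref{main1}, and this argument is just the observation that the Cayley transform differs from a unitary scalar by a multiple of the resolvent. (One could alternatively route through the bounded transform: $F_t = tQ_t$ and $Q_t = (1+t^2)^{-1/2}$, noting $Q_t^2 = R_t$ is compact when $(t+i)^{-1}$ is, hence $F_t^*F_t + Q_t^2 = 1$ shows $1 - F_t^*F_t$ is compact, so $F_t$ is Fredholm — but the Cayley-transform argument above is shorter and stays within the framework just developed.)
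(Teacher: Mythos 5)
Your argument is correct and coincides with the paper's own proof: both derive the identity $I+C_{t}=2I-2i(t+i)^{-1}$, view $I+C_{t}$ as a compact perturbation of the invertible (hence Fredholm) operator $2I$, apply Proposition \ref{basic}$(ii)$, and conclude via Theorem \ref{main1}. The only difference is cosmetic — you rewrite $C_{t}=I-2i(t+i)^{-1}$ directly rather than expanding $2I-(I-C_{t})$ as the paper does — so nothing further is needed.
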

\begin{proof}
 Observe that 
$$I+C_{t}=2I-(I-C_{t})=2I-( I-(t-i)(t+i)^{-1})$$
$$2I-((t+i)-(t-i))(t+i)^{-1}$$
$$=2I-2i(t+i)^{-1}.$$
  Thus $I+C_{t}$ is Fredholm since it is a compact perturbation of Fredholm operator $2I$. Now, employ Theorem \ref{main1} to conclude that $t$ is Fredholm.
\end{proof}

 \begin{remark}
 Theorem \ref{main1} together with Theorem 10.4 of \cite{Lance} imply that the Cayley transform maps selfadjoint regular Fredholm operators on a Hilbert $C^*$-module $E$ onto the set
 $$\{U \in L(E): U \ \ is \ \ unitary \ \ ; I-U \ \ has \ \ a\ \ dense \ \ range; \ \  and \ \  I+U \ \ is\ \ Fredholm \}.$$
\end{remark}
 To study the bounded perturbations or relatively bounded perturbations of selfadjoint regular Fredholm operators, we need to determine the relation between the Cayley transform of the perturbed operator and the Cayley transform of the original operator.
\begin{proposition} \label{main2}
 Let $t$ be a selfadjoint regular operator acting on a Hilbert $C^*$-module  $E$ and $D$ is a selfadjoint bounded operator on $E$. Then we have
$$ C_{t+D}-C_{t}=(I-C_{t})D(D+t+i)^{-1}.$$
\end{proposition}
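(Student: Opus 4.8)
The plan is to compute $C_{t+D}$ and $C_t$ from the defining formula $C_s = (s-i)(s+i)^{-1}$ and subtract. Since $D$ is bounded and selfadjoint and $t$ is selfadjoint regular, $t+D$ is again selfadjoint regular (this is standard: a bounded selfadjoint perturbation of a selfadjoint regular operator is selfadjoint regular, with the same domain $D(t+D)=D(t)$), so by Lemma 9.7 and Lemma 9.8 of \cite{Lance} the operators $t+D\pm i$ are bijections and $C_{t+D}$ is a well-defined unitary. The key algebraic observation is that $t+D+i$ and $t+i$ differ by the bounded operator $D$, so that on $D(t)$ we can write $t+i = (t+D+i) - D$.

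The main computation is then:
\begin{equation*}
C_{t+D}-C_t = (t+D-i)(t+D+i)^{-1} - (t-i)(t+i)^{-1}.
\end{equation*}
I would bring both terms to a common "denominator" by inserting $(t+i)(t+i)^{-1}$ and $(t+D+i)^{-1}(t+D+i)$ appropriately. Concretely, write $t+D-i = (t+D+i) - 2i$ and $t-i = (t+i)-2i$, so that $C_{t+D} = I - 2i(t+D+i)^{-1}$ and $C_t = I - 2i(t+i)^{-1}$; hence
\begin{equation*}
C_{t+D}-C_t = -2i\bigl((t+D+i)^{-1}-(t+i)^{-1}\bigr).
\end{equation*}
Now apply the resolvent-type identity: $(t+D+i)^{-1}-(t+i)^{-1} = -(t+i)^{-1}D(t+D+i)^{-1}$, valid because $(t+i) - (t+D+i) = -D$ and all the relevant maps send the module into $D(t)$ where the subtraction makes sense. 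This gives $C_{t+D}-C_t = 2i(t+i)^{-1}D(t+D+i)^{-1}$. Finally I would identify $2i(t+i)^{-1}$ with $I - C_t$: indeed, as computed in the proof of Corollary \ref{resolvent}, $I - C_t = I - (t-i)(t+i)^{-1} = ((t+i)-(t-i))(t+i)^{-1} = 2i(t+i)^{-1}$. Substituting yields $C_{t+D}-C_t = (I-C_t)D(D+t+i)^{-1}$, as claimed.

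The only genuine subtlety — and the point I would be most careful about — is the justification of the resolvent identity and of the manipulations $t+D-i = (t+D+i)-2i$ at the level of unbounded regular operators on a Hilbert $C^*$-module: one must check that each intermediate expression is defined on the correct dense domain, that $(t+D+i)^{-1}$ and $(t+i)^{-1}$ are the genuine bounded adjointable inverses guaranteed by \cite{Lance}, and that ranges land inside $D(t)=D(t+D)$ so that addition and the operators $t\pm i$, $t+D\pm i$ may be applied. Once the bijectivity of $t\pm i$ and $t+D\pm i$ and the inclusion $D(t+D)=D(t)$ are in hand, all the displayed identities are routine operator algebra, so this is really just bookkeeping with domains rather than a hard analytic point.
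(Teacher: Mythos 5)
Your proposal is correct and is essentially the paper's own argument: both proofs come down to the resolvent identity $(t+D+i)^{-1}-(t+i)^{-1}=-(t+i)^{-1}D(t+D+i)^{-1}$ together with the identification $I-C_{t}=2i(t+i)^{-1}$, the only difference being that you first rewrite $C_{s}=I-2i(s+i)^{-1}$ while the paper instead splits the numerator $t+D-i=(t-i)+D$ and recombines the two resulting terms. The domain bookkeeping you highlight (selfadjoint regularity of $t+D$ with $D(t+D)=D(t)$ and bijectivity of $t+D\pm i$, via Lemmas 9.7 and 9.8 of Lance) is precisely what the paper's computation also tacitly uses, so nothing is missing.
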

\begin{proof}
The fact that operators $t-i$ and $t+i$ are bijections allows us to compute as follows:
 $$C_{D+t}-C_{t}=(D+t-i)(D+t+i)^{-1}-(t-i)(t+i)^{-1}=$$
$$(t-i)((D+t+i)^{-1}-(t+i)^{-1})+D(D+t+i)^{-1}=$$
$$(t-i)(t+i)^{-1}(-D)(t+D+i)^{-1}+D(D+t+i)^{-1}=$$
$$(I-C_{t})D(D+t+i)^{-1}.$$
\end{proof}

The following proposition can be deduced by a similar argument given in above proposition.
\begin{proposition} \label{main3}
 Let $t$ be a selfadjoint regular operator on a Hilbert $C^*$-module  $E$ and $s$ be a selfadjoint regular operator on $E$. Suppose  that $t+s$ is a selfadjoint regular operator then
$$ C_{t+s}-C_{t}=(I-C_{t})s(s+t+i)^{-1}$$
\end{proposition}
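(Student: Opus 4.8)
The plan is to mimic the computation carried out in the proof of Proposition \ref{main2}, replacing the bounded operator $D$ by the selfadjoint regular operator $s$, and being careful that every intermediate expression makes sense on the appropriate domain. The key algebraic identity to establish is the resolvent-type formula
\begin{equation*}
(s+t+i)^{-1}-(t+i)^{-1}=-(t+i)^{-1}\,s\,(s+t+i)^{-1},
\end{equation*}
which holds because, by hypothesis, $t+s$ is a selfadjoint regular operator, so Lemma 9.7 and Lemma 9.8 of \cite{Lance} guarantee that $s+t+i$ is a bijection with bounded inverse, and $t+i$ is likewise a bijection. The point is that for $x\in E$ one has $(s+t+i)^{-1}x\in D(t+s)\subseteq D(t)\cap D(s)$, so applying $t+i$ to it is legitimate, and then $s(s+t+i)^{-1}x$ is a well-defined element of $E$ to which $(t+i)^{-1}$ may be applied.

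Granting that identity, I would write
\begin{align*}
C_{t+s}-C_{t}&=(s+t-i)(s+t+i)^{-1}-(t-i)(t+i)^{-1}\\
&=(t-i)\bigl((s+t+i)^{-1}-(t+i)^{-1}\bigr)+s(s+t+i)^{-1}\\
&=-(t-i)(t+i)^{-1}\,s\,(s+t+i)^{-1}+s(s+t+i)^{-1}\\
&=(I-C_{t})\,s\,(s+t+i)^{-1},
\end{align*}
where in the second line I have split $(s+t-i)(s+t+i)^{-1}=(t-i)(s+t+i)^{-1}+s(s+t+i)^{-1}$, using that on $\operatorname{ran}(s+t+i)^{-1}$ the operator $s+t-i$ agrees with $(t-i)+s$. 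The final equality is just factoring $s(s+t+i)^{-1}$ out on the right.

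The main obstacle, and the reason this is stated as a separate proposition rather than a one-line corollary of Proposition \ref{main2}, is domain bookkeeping: unlike the bounded case, $s$ is unbounded, so each step that moves $s$ past an inverse or splits $s+t-i$ into $(t-i)+s$ must be justified on the correct domain, and one must know a priori that $s+t+i$ is boundedly invertible — this is exactly where the hypothesis that $t+s$ is again a selfadjoint regular operator (rather than merely densely defined) is used, via the Cayley-transform machinery and Lemmas 9.7–9.8 of \cite{Lance}. Once one checks that $(s+t+i)^{-1}$ maps $E$ into $D(t)\cap D(s)$, so that all the manipulations take place between genuine elements of $E$, the computation is formally identical to that of Proposition \ref{main2}, which is why it suffices to say it "can be deduced by a similar argument."
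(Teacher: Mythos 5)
Your proposal is correct and is exactly the argument the paper has in mind: the paper simply says the proposition "can be deduced by a similar argument" to Proposition \ref{main2}, and your computation reproduces that same telescoping/resolvent-identity calculation, merely making explicit the domain checks (that $(s+t+i)^{-1}$ maps $E$ into $D(t)\cap D(s)$ and that $s+t+i$ is a bijection since $t+s$ is selfadjoint regular) that the paper leaves implicit.
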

We recall here that there have been obtained some sufficient conditions which  guarantee that sum of two selfadjoint regular operators remains regular and selfadjoint. In \cite{van}, it is discussed that the perturbations of a regular selfadjoint operator by a local bounded selfadjoint regular operator is again regular and selfadjoint. Moreover, in Theorem 4.5 of \cite{Kaad-L}, Kato-Rellich theorem on Hilbert spaces is extended to Hilbert $C^*$-modules, we use this theorem in the next section. 

\section{ Perturbations of  regular Fredholm operators}
In this section, we study the stability of Fredholm property for regular operators under certain perturbations.
\subsection{Bounded perturbations}
We start this subsection with the following observation which can be useful in some of our proofs.
\begin{remark}\label{norm}
Let $t$ be a selfadjoint regular operator acting on a Hilbert $C^*$-module $E$. Then we have 
$$(t-i)^{-1}=(t+i)(t^{2}+1)^{-1}=tR_{t}+iR_{t}=F_{t}Q_{t}-iQ_{t}^{2}.$$
 Hence $\|(t-i)^{-1}\| \leq 2$.
\end{remark}

 \begin{theorem} \label{bd1}
Let $t$ be a selfadjoint regular Fredholm operator on a Hilbert $C^*$-module $E$. Then there exists  $\varepsilon > 0$ such that $D+t$ is a selfadjoint regular Fredholm operator for any selfadjoint bounded operator $D$ on $E$ satisfying $\|D\|\leq \varepsilon$.  
\end{theorem}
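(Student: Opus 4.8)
The plan is to reduce everything to the bounded adjointable case via the Cayley transform, exactly as the section has been set up. By Theorem \ref{main1}, $t$ is Fredholm if and only if $I+C_t$ is Fredholm, and likewise $D+t$ is Fredholm if and only if $I+C_{D+t}$ is Fredholm — provided we first know that $D+t$ is again a selfadjoint regular operator. For the regularity and selfadjointness of $D+t$, I would invoke the fact recalled at the end of Section 2 (Kato--Rellich for Hilbert $C^*$-modules, Theorem 4.5 of \cite{Kaad-L}): since $D$ is bounded, hence $t$-bounded with relative bound $0$, the sum $D+t$ is automatically selfadjoint and regular on $D(t)$, with no smallness hypothesis needed for this part. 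So the content is entirely about preserving the Fredholm property.

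Next I would use Proposition \ref{main2}, which gives
$$ C_{D+t}-C_{t}=(I-C_{t})D(D+t+i)^{-1}. $$
The right-hand side is a product of bounded adjointable operators, so I only need to estimate its norm. We have $\|I-C_t\|\le 2$ (since $C_t$ is unitary), and $\|D\|\le\varepsilon$ by hypothesis. The remaining factor $(D+t+i)^{-1}$ must be controlled uniformly for small $D$: writing $(D+t+i)^{-1}=(t+i)^{-1}\bigl(I+D(t+i)^{-1}\bigr)^{-1}$ and using Remark \ref{norm}, which gives $\|(t+i)^{-1}\|\le 2$, a Neumann series argument shows that for $\varepsilon<\tfrac12$ the operator $I+D(t+i)^{-1}$ is invertible with $\|(I+D(t+i)^{-1})^{-1}\|\le (1-2\varepsilon)^{-1}$, hence $\|(D+t+i)^{-1}\|\le \tfrac{2}{1-2\varepsilon}$. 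Combining, $\|C_{D+t}-C_t\|\le \tfrac{4\varepsilon}{1-2\varepsilon}$, which tends to $0$ as $\varepsilon\to0$.

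Now apply Proposition \ref{basic}(i) to the bounded adjointable Fredholm operator $I+C_t$: there is $\delta>0$ such that every bounded adjointable operator within $\delta$ of $I+C_t$ is Fredholm. Choose $\varepsilon>0$ small enough that $\varepsilon<\tfrac12$ and $\tfrac{4\varepsilon}{1-2\varepsilon}\le\delta$; then $\|(I+C_{D+t})-(I+C_t)\|=\|C_{D+t}-C_t\|\le\delta$, so $I+C_{D+t}$ is Fredholm, and by Theorem \ref{main1} applied to the selfadjoint regular operator $D+t$ we conclude that $D+t$ is Fredholm. This proves the theorem.

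The only genuine subtlety — the ``main obstacle'' such as it is — is the uniform control of $\|(D+t+i)^{-1}\|$; everything else is bookkeeping with norms of unitaries. One should be careful that $(t+i)^{-1}$ is genuinely a bounded adjointable operator with the stated bound (Lemma 9.7--9.8 of \cite{Lance} together with Remark \ref{norm}), so that the Neumann series manipulation is legitimate in $L(E)$. Given that, the argument is clean and requires no finite-rank or closed-range considerations, which is precisely the point of routing through the Cayley transform.
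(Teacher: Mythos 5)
Your proposal is correct and follows essentially the same route as the paper: reduce via Theorem \ref{main1} and Proposition \ref{main2} to a norm estimate on $C_{D+t}-C_{t}$, and then invoke Proposition \ref{basic}(i) for the bounded adjointable Fredholm operator $I+C_{t}$. The only (harmless) difference is at one step: the paper bounds $\|(D+t+i)^{-1}\|\leq 2$ directly by applying Remark \ref{norm} to the selfadjoint regular operator $D+t$ itself, whereas you obtain the slightly weaker uniform bound $2/(1-2\varepsilon)$ by a Neumann series factorization --- both yield the required smallness of $\|C_{D+t}-C_{t}\|$.
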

\begin{proof}
 By Theorem \ref{main1}, $I+C_{t}$ is Fredholm. Apply Proposition \ref{basic} to get  $\varepsilon > 0$ with the following property: If $S$ is a  bounded adjointable operator on $E$ satisfying $\|(I+C_{t})-S\|\leq 4 \varepsilon$, then $S$ is Fredholm.  Assume that $D$ is a selfadjoint bounded operator on $E$ with $\|D\|\leq \varepsilon$.  Then it follows by Proposition \ref{main2} and Remark \ref{norm} that
$$\|(I+C_{t+D})-(I+C_{t})\|\leq \|I-C_{t}\|\|D\|\|(D+t+i)^{-1}\|\leq 4 \varepsilon. $$
Hence $I+C_{D+t}$ is Fredholm and so is $D+t$, as desired.
\end{proof}

\begin{remark}\label{rm}
 We recall from \cite{Gap} that one can correspond to any regular operator $t$ on Hilbert $C^*$-module $E$ a selfadjoint regular operator $\tilde{t}=\begin{pmatrix} 0&t^{*} \cr t&0 \end{pmatrix}$ on $E\oplus E$.
\end{remark}
\begin{lemma}\label{sadj}
Let $t$ be a regular Fredholm operator on a Hilbert $C^*$-module $E$, then $\tilde{t}=\begin{pmatrix} 0&t^{*} \cr t&0 \end{pmatrix}$ is a selfadjoint regular Fredholm operator.
\end{lemma}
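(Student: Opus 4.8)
The plan is to exploit the elementary characterization, already recorded in Remark \ref{rm}, that passing to $\tilde t$ turns a regular operator into a selfadjoint regular one, and then to check separately that $\tilde t$ inherits the Fredholm property from $t$. The selfadjointness and regularity of $\tilde t = \begin{pmatrix} 0 & t^{*} \cr t & 0 \end{pmatrix}$ is a standard fact about the matrix construction on $E \oplus E$: one verifies $D(\tilde t) = D(t) \oplus D(t^{*})$, computes $(\tilde t)^{*} = \tilde t$ directly from the definition of the adjoint on a direct sum, and notes that $I + \tilde t^{*}\tilde t = \begin{pmatrix} I + t^{*}t & 0 \cr 0 & I + tt^{*}\end{pmatrix}$ has dense range because both $I + t^{*}t$ and $I + tt^{*}$ do (the latter since $tt^{*}$ is regular whenever $t$ is). So the only real content is the Fredholm assertion.

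For the Fredholm part I would work with bounded transforms, using the criterion quoted from \cite{Joachim} that a regular operator is Fredholm if and only if its bounded transform is. First I would compute $F_{\tilde t}$ in terms of $F_t$. Since $\tilde t^{*}\tilde t = \begin{pmatrix} t^{*}t & 0 \cr 0 & tt^{*}\end{pmatrix}$, we get $Q_{\tilde t} = \begin{pmatrix} Q_t & 0 \cr 0 & Q_{t^{*}}\end{pmatrix}$ and hence
$$F_{\tilde t} = \tilde t \, Q_{\tilde t} = \begin{pmatrix} 0 & t^{*}Q_{t^{*}} \cr t Q_t & 0 \end{pmatrix} = \begin{pmatrix} 0 & F_{t^{*}} \cr F_t & 0 \end{pmatrix},$$
and one has the standard identity $F_{t^{*}} = (F_t)^{*}$. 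Thus $F_{\tilde t} = \begin{pmatrix} 0 & F_t^{*} \cr F_t & 0 \end{pmatrix} = \widetilde{F_t}$ is precisely the "off-diagonal" bounded adjointable operator built from $F_t$. Now $t$ Fredholm means $F_t$ is a Fredholm element of $L(E)$, i.e. it has a pseudo-inverse modulo $K(E)$; if $G$ is such an inverse then $\begin{pmatrix} 0 & G^{*} \cr G & 0 \end{pmatrix}$ is a two-sided inverse of $F_{\tilde t}$ modulo $K(E \oplus E)$, so $F_{\tilde t}$ is Fredholm and therefore $\tilde t$ is Fredholm.

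The only step requiring any care is the bookkeeping around adjoints and domains on the direct sum $E \oplus E$ — in particular confirming that $t^{*}$ is regular with $Q_{t^{*}}$, $F_{t^{*}}$ behaving as claimed and that $F_{t^{*}} = F_t^{*}$ — but all of this is contained in \cite{Lance} and is routine. I do not expect a genuine obstacle; the lemma is essentially a translation, via the $2\times 2$ matrix trick and the bounded-transform criterion, of the (trivial) fact that an off-diagonal operator $\begin{pmatrix} 0 & a^{*} \cr a & 0 \end{pmatrix}$ on $E \oplus E$ is Fredholm exactly when $a$ is. Its purpose is to let us reduce statements about regular Fredholm operators to statements about selfadjoint regular Fredholm operators, where Theorem \ref{main1} and the Cayley-transform machinery of Section 2 apply.
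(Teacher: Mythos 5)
Your proposal takes essentially the same route as the paper: the paper's proof consists exactly of computing $F_{\tilde{t}}=\begin{pmatrix} 0&F_{t}^{*} \cr F_{t}&0\end{pmatrix}$ via $F_{t^{*}}=F_{t}^{*}$ and invoking the bounded-transform criterion of \cite{Joachim}, with the selfadjointness and regularity of $\tilde{t}$ delegated to Remark \ref{rm}; your write-up simply fills in the details the paper declares ``clear''. One small correction to your explicit parametrix: if $G$ inverts $F_{t}$ modulo $K(E)$, then your candidate $\begin{pmatrix} 0&G^{*} \cr G&0\end{pmatrix}$ produces diagonal entries $G^{*}F_{t}$ and $GF_{t}^{*}$, which are congruent to $1$ modulo $K(E)$ only if $F_{t}$ is selfadjoint modulo compacts; the correct choice is $\begin{pmatrix} 0&G \cr G^{*}&0\end{pmatrix}$, which gives $GF_{t}$ and $G^{*}F_{t}^{*}=(F_{t}G)^{*}$ on one side and $F_{t}^{*}G^{*}=(GF_{t})^{*}$ and $F_{t}G$ on the other, all congruent to $1$ modulo $K(E\oplus E)$. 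This is a bookkeeping slip, trivially repaired, not a gap in the argument.
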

\begin{proof}
Note that $F_{\tilde{t}}=\begin{pmatrix} 0&F_{t^{*}} \cr F_{t}&0 \end{pmatrix}=\begin{pmatrix} 0&F_{t}^{*} \cr F_{t}&0\end{pmatrix}$, since $F_{t^{*}}=F_{t}^{*}$.  Now, it is clear that $F_{\tilde{t}}$ is Fredholm if $F_{t}$ is Fredholm.
\end{proof}
Remark \ref{rm} enables us to omit the assumption of being  selfadjoint in Theorem \ref{bd1}.
\begin{proposition} \label{bd2}
Let $t$ be a regular operator on a Hilbert $C^*$-module $E$. Then there exists $\varepsilon > 0$ such that $D+t$ is regular Fredholm operator on $E$ for any bounded adjointable operator $D$ on $E$ satisfying $\|D\|\leq \varepsilon$.
\end{proposition}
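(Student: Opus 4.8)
The idea is to reduce the general (not necessarily selfadjoint) case to the selfadjoint case already handled in Theorem \ref{bd1}, using the symmetrization trick from Remark \ref{rm}. So given a regular Fredholm operator $t$ on $E$, first I would pass to the operator $\tilde t = \begin{pmatrix} 0 & t^* \cr t & 0 \end{pmatrix}$ on $E \oplus E$. By Lemma \ref{sadj}, $\tilde t$ is a selfadjoint regular Fredholm operator, so Theorem \ref{bd1} applies: there is $\varepsilon > 0$ such that $\tilde D + \tilde t$ is a selfadjoint regular Fredholm operator for every selfadjoint bounded adjointable operator $\tilde D$ on $E \oplus E$ with $\|\tilde D\| \le \varepsilon$.

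Now, given a bounded adjointable $D$ on $E$ with $\|D\| \le \varepsilon$, I would apply this with the particular choice $\tilde D = \begin{pmatrix} 0 & D^* \cr D & 0 \end{pmatrix}$, which is selfadjoint, bounded adjointable on $E\oplus E$, and satisfies $\|\tilde D\| = \|D\| \le \varepsilon$. Then
\[
\tilde D + \tilde t = \begin{pmatrix} 0 & D^* + t^* \cr D + t & 0 \end{pmatrix} = \widetilde{D+t},
\]
is a selfadjoint regular Fredholm operator on $E \oplus E$. It remains to read off from this that $D+t$ is itself a regular Fredholm operator on $E$. Regularity of $D+t$ follows because $\widetilde{D+t}$ being regular forces its $(2,1)$-corner $D+t$ to be regular (this is exactly the correspondence of Remark \ref{rm} / \cite{Gap}, used in reverse, together with the fact that $D+t$ is closed and densely defined with densely defined adjoint since $D$ is bounded adjointable and $t$ is regular). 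For the Fredholm property, I would use the same bounded-transform computation as in Lemma \ref{sadj}: one has $F_{\widetilde{D+t}} = \begin{pmatrix} 0 & F_{D+t}^* \cr F_{D+t} & 0 \end{pmatrix}$, and an off-diagonal operator of this antidiagonal form is Fredholm on $E\oplus E$ if and only if $F_{D+t}$ is Fredholm on $E$; since $\widetilde{D+t}$ is regular Fredholm its bounded transform $F_{\widetilde{D+t}}$ is Fredholm (by the characterization of \cite{Joachim} recalled in Section 2), hence $F_{D+t}$ is Fredholm, hence $D+t$ is a regular Fredholm operator, again by \cite{Joachim}.

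The only genuine point requiring care — the ``main obstacle'' — is the last equivalence: that an antidiagonal bounded adjointable operator $\begin{pmatrix} 0 & B^* \cr B & 0\end{pmatrix}$ on $E \oplus E$ is Fredholm exactly when $B$ is Fredholm on $E$. One direction is in the proof of Lemma \ref{sadj}; for the converse one checks that a pseudo left (resp.\ right) inverse of the antidiagonal operator, written in $2\times 2$ block form, produces a pseudo left (resp.\ right) inverse of $B$ by extracting the appropriate corner, using that composition and the quotient map $L(E\oplus E) \to L(E\oplus E)/K(E\oplus E)$ respect the block structure. Everything else is a direct substitution into Theorem \ref{bd1}, so the proof is short once this block-Fredholm bookkeeping is in place.
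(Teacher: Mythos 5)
Your proposal is correct and follows essentially the same route as the paper: pass to $\tilde t$ via Remark \ref{rm}, use Lemma \ref{sadj}, note $\widetilde{D+t}=\tilde D+\tilde t$ with $\|\tilde D\|=\|D\|$, and apply Theorem \ref{bd1}. The only difference is that you spell out the converse block-Fredholm step (that Fredholmness of the antidiagonal operator forces Fredholmness of $F_{D+t}$), which the paper leaves implicit — a worthwhile addition, but the same argument.
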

\begin{proof}
 If  $t$ is a regular operator on $E$ and $D$ is a bounded adjointable operator on $E$ then clearly we have $(D+t)^{*}=D^{*}+t^{*}$ and so  $\widetilde{(D+t)}=\tilde{D}+\tilde{t}$. Also, it is easy to see that $\|\tilde{D}\|=\|D\|$.  Now apply Lemma \ref{sadj} and Theorem \ref{bd1} to conclude proposition.
\end{proof}
\begin{proposition}
 Suppose $t$ is a regular Fredholm operator on a Hilbert $C^*$-module $E$ and $K$ is a compact operator on $E$. Then regular operator $t+K$ is Fredholm.
\end{proposition}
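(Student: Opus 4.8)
The plan is to reduce the claim to the already-established bounded case, following the same mechanism used to prove Proposition \ref{bd2}, namely the passage to the selfadjoint ``doubled'' operator $\tilde{t}$. First I would observe that if $t$ is a regular Fredholm operator and $K$ is compact, then $(t+K)^* = t^* + K^*$, so $\widetilde{(t+K)} = \tilde{t} + \tilde{K}$, where $\tilde{K} = \begin{pmatrix} 0 & K^* \cr K & 0 \end{pmatrix}$ is a compact operator on $E \oplus E$. By Lemma \ref{sadj}, $\tilde{t}$ is a selfadjoint regular Fredholm operator, and by Theorem \ref{main1} this is equivalent to $I + C_{\tilde{t}}$ being Fredholm. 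Likewise, $t+K$ is Fredholm if and only if $\widetilde{(t+K)}$ is Fredholm (by Lemma 2.2 of \cite{Joachim} applied to the bounded transforms, exactly as in Lemma \ref{sadj}), which in turn is equivalent to $I + C_{\tilde{t}+\tilde{K}}$ being Fredholm. So it suffices to show that $I + C_{\tilde{t}+\tilde{K}}$ is a compact perturbation of $I + C_{\tilde{t}}$.

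For this I would invoke Proposition \ref{main3} with the selfadjoint regular operators $\tilde{t}$ and $\tilde{K}$ (note $\tilde{K}$ is bounded selfadjoint, hence trivially regular, and $\tilde{t} + \tilde{K}$ is regular and selfadjoint since $\tilde{K}$ is bounded): it gives
$$ C_{\tilde{t}+\tilde{K}} - C_{\tilde{t}} = (I - C_{\tilde{t}})\,\tilde{K}\,(\tilde{K} + \tilde{t} + i)^{-1}. $$
The right-hand side is a product of bounded adjointable operators with the compact operator $\tilde{K}$ as a factor, hence compact (using that $K(E \oplus E)$ is a two-sided ideal in $L(E \oplus E)$). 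Therefore $I + C_{\tilde{t}+\tilde{K}}$ differs from the Fredholm operator $I + C_{\tilde{t}}$ by a compact operator, so by Proposition \ref{basic}$(ii)$ it is Fredholm. Applying Theorem \ref{main1} again, $\tilde{t} + \tilde{K} = \widetilde{(t+K)}$ is a selfadjoint regular Fredholm operator, and hence $t+K$ is a regular Fredholm operator.

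The only genuinely delicate point is the very first reduction, i.e.\ making sure that ``$t$ Fredholm $\iff$ $\tilde{t}$ Fredholm'' and ``$t+K$ Fredholm $\iff$ $\widetilde{(t+K)}$ Fredholm'' both hold; but these follow verbatim from the computation $F_{\tilde{s}} = \begin{pmatrix} 0 & F_s^* \cr F_s & 0 \end{pmatrix}$ in the proof of Lemma \ref{sadj} together with the fact (Lemma 2.2 of \cite{Joachim}) that a regular operator is Fredholm precisely when its bounded transform is. Everything else is a routine chase through the identities already recorded in Section 2, so I expect no real obstacle beyond bookkeeping. One could alternatively bypass the Cayley transform entirely and argue directly that $F_{t+K}$ is a ``compact-like'' perturbation of $F_t$, but the relation between $F_{t+K}$ and $F_t$ for unbounded $K$ is not as clean as the Cayley-transform identity of Proposition \ref{main3}, so the route above is preferable.
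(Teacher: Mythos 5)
Your proposal is correct and follows essentially the same route as the paper: reduce to the selfadjoint case via the doubled operator $\tilde{t}$, use the Cayley-transform identity to see that $I+C_{\widetilde{(t+K)}}$ is a compact perturbation of the Fredholm operator $I+C_{\tilde{t}}$, and conclude by Proposition \ref{basic} and Theorem \ref{main1}. The only cosmetic difference is that you invoke Proposition \ref{main3} where the paper uses Proposition \ref{main2} (interchangeable here since $\tilde{K}$ is bounded), and you are a bit more explicit about why $t+K$ is Fredholm iff $\widetilde{(t+K)}$ is, a point the paper leaves implicit.
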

\begin{proof}
By Remark \ref{rm}, it suffices to prove that if $t$ is a selfadjoint regular Fredholm operator and $K$ is a selfadjoint compact operator then selfadjoint regular operator $t+K$ is Fredholm. Indeed, compactness of operator $K$ together with Proposition \eqref{main2} imply that $I+C_{t+K}$ is a compact perturbation of Fredholm operator $I+C_{t}$. Hence Proposition \ref{basic} implies that $I+C_{t+K}$ is Fredholm and so $t+K$ is Fredholm.  
\end{proof}
\subsection{Unbounded perturbations}

\begin{theorem}
 Let $t: D(t) \subseteq \rightarrow E$ be a selfadjoint  regular Fredholm operator on a Hilbert $C^*$-module $E$. Then there exists  $\varepsilon > 0$ with the following property: If $s: D(s) \subseteq \rightarrow E$ is a selfadjoint regular operator satisfying $D(t) \subseteq D(s)$ and 
$$\|s(x)\|\leq \varepsilon (\|(t+s)(x)\|+\|x\|)$$
for all $x \in D(t)$. Then $t+s$ is a selfadjoint regular Fredholm operator.

\end{theorem}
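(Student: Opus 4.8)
The plan is to reduce this relatively bounded perturbation statement to the bounded-perturbation case (Theorem \ref{bd1}) via the Cayley transform, exactly as in the proof of Theorem \ref{bd1}, but now controlling the perturbation term $(I-C_t)s(s+t+i)^{-1}$ from Proposition \ref{main3} using the relative bound hypothesis instead of a norm bound on $s$ itself. First I would invoke Theorem \ref{main1} to get that $I+C_t$ is Fredholm, and then Proposition \ref{basic}(i) to produce a $\delta>0$ such that every bounded adjointable operator within $\delta$ of $I+C_t$ is Fredholm. By Proposition \ref{main3}, since $t+s$ is assumed selfadjoint regular, we have
$$C_{t+s}-C_t=(I-C_t)s(s+t+i)^{-1},$$
so it suffices to choose $\varepsilon>0$ small enough that the operator norm of $s(s+t+i)^{-1}$ is at most $\delta/\|I-C_t\|$ (note $\|I-C_t\|\le 2$). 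Hence the crux is to estimate $\|s(s+t+i)^{-1}\|$ in terms of $\varepsilon$.

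The key step is the following norm estimate. Fix $y\in E$ and write $x=(s+t+i)^{-1}y$, which lies in $D(t+s)$. One needs to know $x\in D(t)$ (equivalently $x\in D(s)$, using $D(t)\subseteq D(s)$); this should follow because $t+s$ and $t$ have the same domain under these hypotheses — I would make this precise using that $s$ is $(t+s)$-bounded with relative bound $\varepsilon<1$, so that $D(t+s)=D(t)$ by a standard closed-graph / Kato-Rellich-type argument (Theorem 4.5 of \cite{Kaad-L}). Granting $x\in D(t)$, apply the hypothesis
$$\|s(x)\|\le \varepsilon\big(\|(t+s)(x)\|+\|x\|\big).$$
Now $\|(t+s)(x)\|=\|(s+t+i)(x)-ix\|\le \|y\|+\|x\|$, and since $s+t+i$ is a bijection with $\|(s+t+i)^{-1}\|$ bounded — here I would use Remark \ref{norm}-type reasoning or, more carefully, note $\|(s+t+i)x\|^2=\|(s+t)x\|^2+\|x\|^2\ge\|x\|^2$ because $s+t$ is selfadjoint, hence $\|x\|\le\|y\|$ — we obtain $\|s(x)\|\le \varepsilon(2\|y\|+\|y\|)=3\varepsilon\|y\|$. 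Thus $\|s(s+t+i)^{-1}\|\le 3\varepsilon$, and choosing $\varepsilon<\min\{1/2,\ \delta/6\}$ forces $\|C_{t+s}-C_t\|\le 2\cdot 3\varepsilon<\delta$, so $I+C_{t+s}$ is Fredholm, and Theorem \ref{main1} applied to the selfadjoint regular operator $t+s$ gives that $t+s$ is Fredholm.

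The main obstacle I anticipate is not the Fredholm bookkeeping but verifying that $t+s$ genuinely has the same domain as $t$ and that the resolvent-type operator $(s+t+i)^{-1}$ is everywhere defined and bounded on $E$ — i.e., that the statement's standing hypothesis "$t+s$ is a selfadjoint regular operator" together with the relative bound is enough to run the argument with $x=(s+t+i)^{-1}y\in D(t)$. In the Hilbert-space setting this is the content of the Kato-Rellich theorem; here one should cite Theorem 4.5 of \cite{Kaad-L} to guarantee both that $t+s$ is selfadjoint regular with $D(t+s)=D(t)$ and that $\pm i$ lie in its resolvent set with norm-bounded resolvents (via Lemmas 9.7–9.8 of \cite{Lance} applied to the selfadjoint regular operator $t+s$, since its Cayley transform is defined precisely when $t+s+i$ and $t+s-i$ are bijections). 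Once that infrastructure is in place, the estimate $\|s(s+t+i)^{-1}\|\le 3\varepsilon$ and the reduction to Theorem \ref{bd1}'s proof strategy are routine.
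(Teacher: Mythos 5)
Your proposal is correct and follows essentially the same route as the paper: reduce via the Cayley transform using Theorem \ref{main1}, Proposition \ref{basic} and Proposition \ref{main3}, invoke the Kato--Rellich theorem of \cite{Kaad-L} to get that $t+s$ is selfadjoint regular with domain $D(t)$, and then estimate $\|s(s+t+i)^{-1}\|\le C\varepsilon$ (your constant $3$, using $\|(s+t+i)^{-1}\|\le 1$, versus the paper's $5\varepsilon$, which only uses $\|(s+t+i)^{-1}\|\le 2$). The only step you should spell out is converting the hypothesis into a $t$-relative bound $\|s(x)\|\le \frac{\varepsilon}{1-\varepsilon}\bigl(\|t(x)\|+\|x\|\bigr)$ with $\frac{\varepsilon}{1-\varepsilon}<1$, which is precisely how the paper justifies applying the Kato--Rellich theorem before any Cayley-transform estimate.
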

\begin{proof}
 Note that Theorem \ref{main1} and Proposition \ref{basic} yield that there is a positive number $\varepsilon^{\prime}$ such that any bounded adjointable operator $D$ on $E$ satisfying $\|(I+C_{t})-D\|\leq \varepsilon^{\prime}$ is also Fredholm. Put $\varepsilon=min\{\frac{1}{4},\frac{\varepsilon^{\prime}}{10}\}$. Suppose that $s$ is a selfadjoint regular operator on $E$ with $D(t) \subseteq D(s)$ and satisfying
\begin{equation}\label{unbd}
 \|s(x)\|\leq \varepsilon(\|(t+s)(x)\|+\|x\|)
\end{equation}
for all $x \in D(t)$. We are going to show that $t+s$ is a selfadjoint regular Fredholm operator. For this, first note that it follows from  $\varepsilon \leq \frac{1}{4}$ and \ref{unbd} that
\begin{equation}
\|s(x)\|\leq \frac{1}{3}(\|x\|+\|t(x)\|)
\end{equation}\label{1}
for all $x \in D(t)$. Hence Theorem 4.4 of \cite{Kaad-L} implies that $t+s$ is a selfadjoint regular operator on $E$. Observe that $(t+s+i)^{-1}(x) \in D(t)$ for all $ x\in E$, so the inequality (4.1) yields that
$$\|s(s+t+i)^{-1}(x)\| \leq \varepsilon(\|(s+t)(s+t+i)^{-1}x\|+\|(s+t+i)^{-1}(x)\|)$$
$$\leq \varepsilon\|(s+t+i)(s+t+i)^{-1}x\|+2 \varepsilon\|(s+t+i)^{-1}(x)\|)$$
$$\leq 5\varepsilon\|x\|$$
for all $x \in E$. Employ Proposition \ref{main3} to conclude 
\begin{equation}
\|(I+C_{t+s})(x)-(I+C_{t})(x)\|\leq \|I-C_{t}\|\|s(s+t+i)^{-1}(x)\| 
\leq 10\varepsilon\|x\| \leq \varepsilon^{\prime}\|x\|
\end{equation}
 for all $x\in E$. Thus $I+C_{t+s}$ is Fredholm, and so $t+s$ is Fredholm, as desired.
\end{proof}


For a closed operator on a Hilbert space, the domain equipped with the graph scaler product is itself Hilbert space, see \cite{Kato}. In \cite{Kaad-L}, the analogous construction is discussed for a (semi)regular operator. Let $t$ be a regular operator and $x, y \in D(t)$, put
$$<x,y>_{t}=<x,y>+<t(x),t(y)>$$
 Then $D(t)$  with this inner product becomes a Hilbert $C^*$-module, denoted by $E(t)$. It follows from Proposition 2.4 of \cite{Kaad-L} that the inclusion map $ \iota: D(t) \rightarrow E$ is a bounded adjointable operator.
 
 \begin{definition}
 Let $t: D(t) \subseteq \rightarrow E$ and $s: D(s) \subseteq \rightarrow E$ be regular operators on a Hilbert $C^*$-module $E$ such that $D(t) \subseteq D(s)$.  We call $s$ is relatively $t$-compact or simply, a $t$-compact  if $s: E(t)\rightarrow E$ is a compact operator.
\end{definition}

\begin{theorem}\label{compact}
 Let $t$ and $ s$ be selfadjoint regular operators on a countably generated Hilbert $C^*$-module $E$ over unital $C^*$-algebra $A$. Suppose that $t$ is Fredholm and $s$ is a $t$-compact operator, then $s+t$ is a selfadjoint regular Fredholm operator. 
\end{theorem}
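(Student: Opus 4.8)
The plan is to follow the strategy used throughout this section: pass to the Cayley transform and reduce the Fredholm question to the bounded adjointable situation covered by Proposition~\ref{basic}. Explicitly, I would $(1)$ show that $t+s$ is a selfadjoint regular operator, and $(2)$ show that $I+C_{t+s}$ is a compact perturbation of $I+C_{t}$, which is Fredholm by Theorem~\ref{main1} because $t$ is Fredholm. Proposition~\ref{basic}$(ii)$ would then give that $I+C_{t+s}$ is Fredholm, and Theorem~\ref{main1} applied once more would give that $t+s$ is Fredholm.

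For step $(1)$, I would first record that $Q_{t}$ is a unitary from $E$ onto $E(t)$: indeed $\langle Q_{t}x,Q_{t}y\rangle_{t}=\langle x,(Q_{t}^{2}+F_{t}^{*}F_{t})y\rangle=\langle x,y\rangle$ since $Q_{t}(1+t^{*}t)Q_{t}=1$, and $ran(Q_{t})=D(t)=E(t)$. Hence $s$ is $t$-compact precisely when $sQ_{t}\in K(E)$. Using this I would prove that $s$ has relative $t$-bound $0$: given $\varepsilon>0$, approximate $sQ_{t}$ in norm to within $\varepsilon$ by a finite-rank operator $F=\sum_{i=1}^{n}x_{i}\langle z_{i},\,\cdot\,\rangle$ with each $z_{i}\in D(t)$ (possible since $D(t)$ is dense), and estimate, for $x\in D(t)$,
$$\|s(x)\|=\|sQ_{t}(1+t^{*}t)^{1/2}x\|\leq \Big(\sum_{i}\|x_{i}\|\,\|(1+t^{*}t)^{1/2}z_{i}\|\Big)\|x\|+\varepsilon\,\|x\|_{t},$$
where $\|\cdot\|_{t}$ is the norm of $E(t)$; since $\|x\|_{t}\le\|x\|+\|t(x)\|$ this yields $\|s(x)\|\leq C_{\varepsilon}\|x\|+\varepsilon\|t(x)\|$. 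In particular the relative bound is $<1$, so Theorem~4.4 of \cite{Kaad-L} applies and $t+s$ is a selfadjoint regular operator with $D(t+s)=D(t)$.

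For step $(2)$, I would combine the identity $I-C_{t}=I-(t-i)(t+i)^{-1}=2i(t+i)^{-1}$ (computed as in the proof of Corollary~\ref{resolvent}) with Proposition~\ref{main3} and the second resolvent identity, legitimate since $D(t)\subseteq D(s)$ and $D(t+s)=D(t)$:
\begin{align*}
C_{t+s}-C_{t}&=(I-C_{t})\,s\,(s+t+i)^{-1}=2i(t+i)^{-1}s(s+t+i)^{-1}\\
&=2i\big((t+i)^{-1}-(s+t+i)^{-1}\big)=2i\,(s+t+i)^{-1}\,s\,(t+i)^{-1}.
\end{align*}
Now $(t+i)^{-1}=tR_{t}-iR_{t}=(F_{t}-iQ_{t})Q_{t}=Q_{t}(F_{t}-iQ_{t})$, so, since $F_{t}-iQ_{t}$ is unitary on $E$ (proof of Theorem~\ref{main1}) and $Q_{t}\colon E\to E(t)$ is unitary, $(t+i)^{-1}$ is a unitary from $E$ onto $E(t)$. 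Therefore $s(t+i)^{-1}$ is the composition of the bounded adjointable operator $(t+i)^{-1}\colon E\to E(t)$ with the compact operator $s\colon E(t)\to E$, hence $s(t+i)^{-1}\in K(E)$; composing on the left with $(s+t+i)^{-1}\in L(E)$ keeps it in $K(E)$. Thus $C_{t+s}-C_{t}\in K(E)$, so $I+C_{t+s}=(I+C_{t})+(C_{t+s}-C_{t})$ is a compact perturbation of the Fredholm operator $I+C_{t}$, and the proof concludes as indicated above.

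I expect the main obstacle to be step $(1)$, namely the passage from $t$-compactness to relative $t$-bound $0$: the unbounded factor $(1+t^{*}t)^{1/2}$ must be transferred through the finite-rank operator $F$ on the dense domain $D(t)$, one must know the approximating finite-rank operators can be chosen with their ``ket'' legs in $D(t)$, and the resulting estimate has to be read off correctly. This is the Hilbert $C^{*}$-module counterpart of the classical fact that a relatively compact perturbation is relatively bounded with bound zero, and it — together with the appeals to Theorem~4.4 of \cite{Kaad-L} and to Exel's Proposition~\ref{basic} — is where the standing hypotheses (that $E$ be countably generated and $A$ unital) are used. A secondary point to verify is that $Q_{t}$ and $(t+i)^{-1}$ are genuinely adjointable maps $E\to E(t)$, not merely bounded; the identities $Q_{t}(1+t^{*}t)^{1/2}\subseteq 1$ and $(t+i)^{-1}=(F_{t}-iQ_{t})Q_{t}$ from \cite{Lance} supply the required adjoints.
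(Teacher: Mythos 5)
Your proposal is correct, and its core is the same as the paper's argument: reduce to the Cayley transform via Proposition \ref{main3}, show $C_{t+s}-C_{t}$ is compact because $s(t+i)^{-1}\in K(E)$ (using that $(t+i)^{-1}\colon E\to E(t)$ is adjointable and $s\colon E(t)\to E$ is compact), and conclude with Proposition \ref{basic} and Theorem \ref{main1}. The only genuine divergence is in how selfadjoint regularity of $t+s$ is obtained: the paper simply quotes Lemma A.4 of \cite{Wahl1}, whereas you derive from $t$-compactness (via the unitary $Q_{t}\colon E\to E(t)$ and finite-rank approximation of $sQ_{t}$ with ket legs moved into the dense submodule $D(t)$) that $s$ has relative $t$-bound smaller than any $\varepsilon$, and then invoke the Kato--Rellich theorem of \cite{Kaad-L}. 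That detour is correct and arguably buys something: it is self-contained, it reuses machinery already present in the paper (the same Theorem 4.4 of \cite{Kaad-L} is used in the relatively bounded perturbation theorem), and since \cite{Kaad-L} does not require $E$ countably generated or $A$ unital, it suggests those standing hypotheses are only needed for the route through \cite{Wahl1}. The remaining difference is cosmetic: the paper applies the resolvent identity to show $s(t+s+i)^{-1}$ is compact and then multiplies by $I-C_{t}$, while you first identify $I-C_{t}=2i(t+i)^{-1}$ and rewrite $C_{t+s}-C_{t}=2i(s+t+i)^{-1}s(t+i)^{-1}$; both hinge on exactly the same compactness of $s(t+i)^{-1}$ and the ideal property of $K(E)$.
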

\begin{proof}
First note that  Lemma A.4 of \cite{Wahl1} yields that  $s+t$ is a selfadjoint regular operator. It is straightforward to check that $(t+i)^{-1}: E \rightarrow E(t)$ is bounded adjointable operator.  Hence  $s(t+i)^{-1}$ is a compact operator on $E$ since we assume that $s$ is  $t$-compact. So it follows from the equation
$(t+s+i)^{-1}=(t+i)^{-1}-(t+s+i)^{-1}s(t+i)^{-1}$  that $s(t+s+i)^{-1}=s(t+i)^{-1}-s(t+s+i)^{-1}s(t+i)^{-1}$ is a compact operator on $E$. Hence, by Proposition \ref{main3}, one can see that $I+C_{t+s}$ is a compact perturbation of Fredholm operator $I+C_{t}$. Therefore the desired result can be concluded by Proposition \ref{basic} and Theorem \ref{main1}.
\end{proof}

\begin{corollary} \label{compact2}
 Let $t$ be a selfadjoint regular Fredholm operator on a Hilbert $C^*$-module $E$. Suppose that $A$ is a selfadjoint bounded $t$-compact operator on $E$, then $A+t$ is a selfadjoint regular Fredholm operator.
\end{corollary}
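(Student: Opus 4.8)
The plan is to run the same argument as in the proof of Theorem~\ref{compact}, i.e.\ to exhibit $I+C_{A+t}$ as a compact perturbation of the Fredholm operator $I+C_t$, the one difference being that here the selfadjoint regularity of $A+t$ comes for free from the boundedness of $A$, so the countable generation hypothesis used in Theorem~\ref{compact} is not needed. Note also that this genuinely improves the earlier compact-perturbation proposition: a bounded $t$-compact operator need not be compact on $E$ itself.

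First I would record that $A+t$, with domain $D(A+t)=D(t)$, is a selfadjoint regular operator: it is closed and densely defined, and since $A$ is bounded adjointable, $(A+t)^{*}=A^{*}+t^{*}=A+t$; regularity is preserved under bounded adjointable perturbations (the same fact already invoked in Theorem~\ref{bd1} and Proposition~\ref{bd2}). In particular $A+t+i$ is a bijection, $(A+t+i)^{-1}\in L(E)$, and the Cayley transform $C_{A+t}$ is defined. Then, exactly as in the proof of Theorem~\ref{compact}, $(t+i)^{-1}\colon E\to E(t)$ is bounded adjointable, and by hypothesis $A\colon E(t)\to E$ is compact, so $A(t+i)^{-1}$ is a compact operator on $E$.

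Next I would use the resolvent identity
$$(t+A+i)^{-1}=(t+i)^{-1}-(t+A+i)^{-1}A(t+i)^{-1}$$
and multiply on the left by $A$ to get
$$A(t+A+i)^{-1}=A(t+i)^{-1}-A(t+A+i)^{-1}A(t+i)^{-1},$$
which is compact. By Proposition~\ref{main3},
$$C_{t+A}-C_t=(I-C_t)\,A\,(A+t+i)^{-1}$$
is then compact, so $I+C_{t+A}$ is a compact perturbation of $I+C_t$, the latter being Fredholm by Theorem~\ref{main1}. Proposition~\ref{basic}$(ii)$ yields that $I+C_{t+A}$ is Fredholm, and a final application of Theorem~\ref{main1} gives that $A+t$ is a selfadjoint regular Fredholm operator.

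The only point requiring care is the first step, namely that a bounded selfadjoint perturbation of a selfadjoint regular operator remains selfadjoint regular over an \emph{arbitrary} Hilbert $C^*$-module, not merely a countably generated one; but this is standard and is already implicit in the bounded-perturbation results of this subsection, so I do not expect it to be a real obstacle. The remainder is the same compact-perturbation bookkeeping as in Theorem~\ref{compact}, now carried out with the bounded operator $A$ in place of the unbounded $s$.
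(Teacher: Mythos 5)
Your proof is correct and follows essentially the same route the paper intends for this corollary, namely the argument of Theorem~\ref{compact}: the resolvent identity makes $A(t+A+i)^{-1}$ compact, so by Proposition~\ref{main2} (or \ref{main3}) the operator $I+C_{t+A}$ is a compact perturbation of the Fredholm operator $I+C_{t}$, and Theorem~\ref{main1} finishes the argument. Your added observation that the countable-generation hypothesis of Theorem~\ref{compact} can be dropped here, because boundedness of $A$ already yields selfadjoint regularity of $t+A$ (Kato--Rellich with relative bound $0$), is a small but genuine sharpening over merely quoting that theorem, whose stated hypotheses do not literally match the corollary's.
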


\subsection{Perturbations in terms of the gap metric}

The most natural and general definition of smallness of the perturbations of Fredholm operators on Hilbert spaces or Banach spaces is given in terms of the gap metric. Indeed, Theorem  IV4.30 of \cite{Kato} states that the set of Fredholm operators in the space of closed linear operators on a Banach space is open with respect to the gap metric. In this subsection, we generalize this result to regular Fredholm operators.

Let us first briefly recall the definition of the gap metric in the space of regular operators on a Hilbert $C^*$-module and some of its basic properties.

Let $t$ be a regular operator on a Hilbert $C^*$-module $E$  and $G(t)=\{(x,t(x)): x \in D(t)\}$ be its graph. Then Theorem 9.3 of \cite{Lance} implies that $G(t)$ is orthogonally complemented in $E \bigoplus E$, we denote by $ P_{G(t)}$ the orthogonal projection onto $G(t)$. Assume that $t$ and $s$ are regular operators on $E$, define the gap metric between $t$ and $s$ by $g(t,s)=\|P_{G(t)}-P_{G(s)}\|$. 

The matrix representation  of the orthogonal projection onto the graph of a regular operator (see \cite{Lance}) shows that gap metric is equivalent to the metric $d$ given by
\begin{equation} \label{gap}
 d(t,s)=sup\{\|R_{t}-R_{s}\|,\|R_{t^{*}}-R_{s^{*}}\|,\|tR_{t}-sR_{s}\|\}.
 \end{equation}
Suppose $t$ and $s$ are selfadjoint regular operators on a Hilbert $C^*$-module $E$, then Corollary 2.8 of \cite{Gap} implies that 
$$ \frac{1}{4} \|C_{t}-C_{s}\| \leq d(t,s) \leq \frac{1}{2} \|C_{t}-C_{s}\|.$$
  We recall from Proposition 3.7 of \cite{Gap} that $d(t,s)=d(\tilde{t},\tilde{s})$ for all regular operators $t$ and $s$ on $E$, where $\tilde{t}$ and $\tilde{s}$ are defined in Remark \eqref{rm}. 

\begin{theorem} \label{gap}
Let $t$ be a selfadjoint regular Fredholm operator on a Hilbert $C^*$-module $E$. Then there exists $r >0$ with the following property: If $s$ is a selfadjoint regular operator on $E$ with $g(s,t)\leq r$ then $s$ is Fredholm, where $g$ is the gap metric.
\end{theorem}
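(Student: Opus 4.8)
The plan is to reduce the statement, via the Cayley transform, to the already-established stability of the Fredholm property for bounded adjointable operators (Proposition~\ref{basic}(i)). Recall that by Theorem~\ref{main1}, $t$ is Fredholm if and only if $I+C_t$ is Fredholm, and the same criterion applies to $s$. So it suffices to produce $r>0$ so small that $g(s,t)\le r$ forces $\|(I+C_t)-(I+C_s)\|=\|C_t-C_s\|$ to be below the perturbation threshold $\varepsilon$ furnished by Proposition~\ref{basic}(i) applied to the bounded Fredholm operator $I+C_t$.

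First I would recall that the gap metric $g$ and the metric $d$ of \eqref{gap} are equivalent, so there is a constant $c>0$ with $d(s,t)\le c\,g(s,t)$ for all regular $s,t$; then, since $t$ and $s$ are selfadjoint, Corollary~2.8 of \cite{Gap} (quoted just before the theorem) gives $\|C_t-C_s\|\le 4\,d(t,s)$. Chaining these, $\|C_t-C_s\|\le 4c\,g(s,t)$. Now let $\varepsilon>0$ be the constant from Proposition~\ref{basic}(i) for the operator $A=I+C_t$, i.e.\ any bounded adjointable $D$ with $\|A-D\|\le\varepsilon$ is Fredholm. Set $r=\varepsilon/(4c)$. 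If $s$ is a selfadjoint regular operator with $g(s,t)\le r$, then $\|(I+C_t)-(I+C_s)\|=\|C_t-C_s\|\le 4c\,r=\varepsilon$, so $I+C_s$ is Fredholm, and by Theorem~\ref{main1} $s$ is Fredholm. One subtlety worth a sentence: $I+C_s$ is genuinely a bounded adjointable operator on $E$, because $C_s$ is the Cayley transform of a selfadjoint regular operator and hence a unitary in $L(E)$; this is exactly what lets Proposition~\ref{basic} apply.

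The only real point requiring care — and the main obstacle — is making sure the equivalence constant $c$ between $g$ and $d$ is genuinely uniform, i.e.\ does not depend on the operators in play, so that $r$ can be chosen in advance depending only on $t$. This is where I would lean on the ``matrix representation of the orthogonal projection onto the graph'' remark preceding the theorem, which exhibits the entries of $P_{G(t)}-P_{G(s)}$ as universal combinations (in fact, with absolute bounds) of $R_t-R_s$, $R_{t^*}-R_{s^*}$ and $tR_t-sR_s$ and conversely; this yields a two-sided estimate between $g$ and $d$ with absolute constants, so one may take, e.g., $d(t,s)\le g(t,s)$ in the direction we need. (If one prefers to avoid tracking the constant, one can instead argue by contradiction: if no such $r$ existed, there would be selfadjoint regular $s_n$ with $g(s_n,t)\to 0$ but $s_n$ not Fredholm; then $d(s_n,t)\to 0$, hence $\|C_t-C_{s_n}\|\to 0$, so $I+C_{s_n}$ is eventually Fredholm by Proposition~\ref{basic}(i), contradicting non-Fredholmness of $s_n$ via Theorem~\ref{main1}.)

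Finally I would remark that the selfadjointness hypothesis on $s$ can be dropped, exactly as in Proposition~\ref{bd2}: given an arbitrary regular Fredholm $t$, pass to $\tilde t=\begin{pmatrix}0&t^*\\ t&0\end{pmatrix}$, which is selfadjoint regular Fredholm by Lemma~\ref{sadj}; apply the theorem to $\tilde t$; and use the identity $d(s,t)=d(\tilde s,\tilde t)$ (Proposition~3.7 of \cite{Gap}) together with the equivalence of $d$ and $g$ to transfer the conclusion back. This gives the general form of part $(iii)$ of Theorem~\ref{main-a} and thereby proves that the set of regular Fredholm operators is open in $R(E)$ for the gap metric.
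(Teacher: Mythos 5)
Your proposal is correct and follows essentially the same route as the paper: reduce to the bounded case via Theorem~\ref{main1}, bound $\|C_t-C_s\|$ by $4\,d(t,s)$ using Corollary~2.8 of \cite{Gap}, invoke Proposition~\ref{basic}(i) for $I+C_t$, and pass from $d$ to the gap metric $g$ by their equivalence. Your extra care about the uniformity of the equivalence constant (and the final reduction of the selfadjointness hypothesis via $\tilde t$, which is the paper's Theorem~\ref{gap2}) only makes explicit what the paper leaves implicit.
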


\begin{proof}
Note that $I+C_ {t}$ is Fredholm by Theorem \ref{main}, hence we can employ Proposition \ref{basic} to get $\varepsilon  > 0$ such that any bounded adjointable operator $S$ on $E$ satisfying $\|(I+C_{t})-S\|\leq 4 \varepsilon$ is Fredholm. Suppose that $s$ is a selfadjoint regular operator  on $E$ with $d(s,t)\leq \varepsilon$, where $d$ as given in equerry{gap}. Then Corollary 2.8 of \cite{Gap} yields that $\|(I+C_{t})-(I+C_{s})\|\leq 4\varepsilon$. This shows that  $I+C_{s}$ is Fredholm and so $s$ is Fredholm. Finally, observe  that the gap metric is equivalent to $d$, and this finishes the proof.
\end{proof}

Thanks to Remark \ref{rm}, we can deduce the following theorem from Theorem \ref{gap}.
\begin{theorem}\label{gap2}
 Let $t$ be a regular Fredholm operator on a Hilbert $C^*$-module $E$. Then there exists $r >0 $ such that any regular operator $s$ on $E$ satisfying $d(s,t)\leq r$ is also Fredholm.
\end{theorem}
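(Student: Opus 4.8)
The plan is to deduce the statement from the selfadjoint case, Theorem \ref{gap}, by means of the doubling construction of Remark \ref{rm}. Let $t$ be a regular Fredholm operator on $E$. By Lemma \ref{sadj}, the operator $\tilde{t}=\begin{pmatrix} 0&t^{*} \cr t&0 \end{pmatrix}$ is a selfadjoint regular Fredholm operator on $E\oplus E$. Applying Theorem \ref{gap} to $\tilde{t}$ yields $r_{0}>0$ such that every selfadjoint regular operator $u$ on $E\oplus E$ with $g(u,\tilde{t})\le r_{0}$ is Fredholm. Since the gap metric $g$ is equivalent to the metric $d$, there is a constant $c>0$ with $g(u,v)\le c\,d(u,v)$ for all regular operators $u,v$; I then set $r=r_{0}/c$.

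Next, suppose $s$ is a regular operator on $E$ with $d(s,t)\le r$. Then $\tilde{s}=\begin{pmatrix} 0&s^{*} \cr s&0 \end{pmatrix}$ is a selfadjoint regular operator on $E\oplus E$, and $d(\tilde{s},\tilde{t})=d(s,t)\le r$ by Proposition 3.7 of \cite{Gap}; hence $g(\tilde{s},\tilde{t})\le cr=r_{0}$, so $\tilde{s}$ is Fredholm. By the characterization of \cite{Joachim} recalled above, this means that the bounded transform $F_{\tilde{s}}=\begin{pmatrix} 0&F_{s}^{*} \cr F_{s}&0 \end{pmatrix}$ (using $F_{s^{*}}=F_{s}^{*}$) is a Fredholm bounded adjointable operator on $E\oplus E$.

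It then remains to run the converse of Lemma \ref{sadj}, i.e.\ to show that Fredholmness of $F_{\tilde{s}}$ forces $F_{s}$, and hence $s$, to be Fredholm. Working in the Calkin algebra $L(E\oplus E)/K(E\oplus E)\cong M_{2}\big(L(E)/K(E)\big)$, the image of $F_{\tilde{s}}$ is an anti-diagonal matrix with off-diagonal entries the classes of $F_{s}^{*}$ and $F_{s}$, and such a matrix is invertible exactly when the class of $F_{s}$ is invertible; equivalently, one extracts pseudo left and right inverses of $F_{s}$ from those of $F_{\tilde{s}}$ by reading off the appropriate matrix entries. Thus $s$ is a regular Fredholm operator. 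The only points requiring care are this descent step (Lemma \ref{sadj} records only the forward direction), together with the bookkeeping of the equivalence of $g$ and $d$ and of the identity $d(\tilde{s},\tilde{t})=d(s,t)$; I do not expect any of these to be a genuine obstacle, so the argument is essentially the assembly of Lemma \ref{sadj}, Theorem \ref{gap}, and these metric identities.
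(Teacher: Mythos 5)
Your proposal follows essentially the same route as the paper: pass to $\tilde{t}$ via Lemma \ref{sadj}, apply the selfadjoint case (Theorem \ref{gap}), and use $d(\tilde{s},\tilde{t})=d(s,t)$ from Proposition 3.7 of \cite{Gap} to conclude that $\tilde{s}$, and hence $s$, is Fredholm. The only difference is that you spell out the descent step (Fredholmness of $F_{\tilde{s}}$ implies that of $F_{s}$, via the anti-diagonal matrix in the Calkin algebra) and the $g$-versus-$d$ bookkeeping, both of which the paper leaves implicit; your argument is correct.
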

\begin{proof}
Suppose that $t$ is a regular Fredholm operator. By Lemma \ref{sadj}, $\tilde{t}$ is a selfadjoint Fredholm operator. Employ Theorem \ref{gap} to obtain $r > 0$ such that any selfadjoint regular operator satisfying $d(s, \tilde{t}) \leq r$ is Fredholm. We show that this positive scalar $r$ satisfies in the claim of theorem. Let $s$ be regular Fredholm operator satisfying $d(s, t) \leq r$. Then by Proposition 3.7 of \cite{Gap}, $d(\tilde{t}, \tilde{s}) =d(t,s)$, hence $\tilde{s}$ is Fredholm, and so is $s$, as desired.
\end{proof}
\begin{corollary}
 The space of all bounded adjointable Fredholm operators on a Hilbert $C^*$-module $E$ is open in $R(E)$ with respect to the gap metric.
\end{corollary}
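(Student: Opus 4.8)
The plan is to combine Theorem~\ref{gap2} with the elementary observation that a regular operator lying close to a \emph{bounded} operator in the gap metric is itself bounded. Fix a bounded adjointable Fredholm operator $A$, regarded as an element of $R(E)$. By Theorem~\ref{gap2} there is $r_{0}>0$ such that every $s\in R(E)$ with $d(s,A)\le r_{0}$ is a regular Fredholm operator, where $d$ is the metric recalled in this subsection, $d(t,s)=\sup\{\|R_{t}-R_{s}\|,\|R_{t^{*}}-R_{s^{*}}\|,\|tR_{t}-sR_{s}\|\}$, which is equivalent to the gap metric $g$. It therefore remains to produce $r>0$ with $r\le r_{0}$ such that $d(s,A)\le r$ additionally forces $s\in L(E)$; once this is done, every such $s$ is a bounded adjointable Fredholm operator, and openness with respect to $g$ follows from the equivalence of $g$ and $d$.

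For the boundedness step I would argue as follows. Since $A$ is bounded we have $I\le 1+A^{*}A\le(1+\|A\|^{2})I$, so $R_{A}=(1+A^{*}A)^{-1}$ is invertible in $L(E)$ with $R_{A}\ge(1+\|A\|^{2})^{-1}I$. For any regular operator $s$, the operator $R_{s}=(1+s^{*}s)^{-1}$ is a positive element of $L(E)$, and since $R_{A}-R_{s}$ is selfadjoint,
\[
R_{s}=R_{A}-(R_{A}-R_{s})\ \ge\ R_{A}-\|R_{A}-R_{s}\|\,I\ \ge\ \bigl((1+\|A\|^{2})^{-1}-\|R_{A}-R_{s}\|\bigr)I .
\]
Hence if $d(s,A)<(1+\|A\|^{2})^{-1}$, then $\|R_{A}-R_{s}\|<(1+\|A\|^{2})^{-1}$ and $R_{s}\ge\delta I$ for some $\delta>0$, so $R_{s}$ is invertible in $L(E)$. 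Consequently $1+s^{*}s=R_{s}^{-1}\in L(E)$; in particular $D(s^{*}s)=E$, hence $D(s)=E$, and $\|sx\|^{2}=\|\langle x,s^{*}sx\rangle\|\le\|s^{*}s\|\,\|x\|^{2}$ shows that $s$ is bounded. A regular operator with full domain is automatically bounded adjointable (its adjoint is then likewise bounded and everywhere defined, cf.\ \cite{Lance}), so $s\in L(E)$.

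Taking $r=\min\{r_{0},\tfrac{1}{2}(1+\|A\|^{2})^{-1}\}$ then finishes the argument, up to rescaling by the explicit constants relating $g$ and $d$ if one prefers to phrase the final neighbourhood directly in terms of $g$. The only genuinely new ingredient is the boundedness step above; the Fredholm conclusion is exactly Theorem~\ref{gap2}, and the passage between the two metrics has already been recorded. I expect no real obstacle beyond bookkeeping of these constants: the only slightly delicate point, that a small gap perturbation of a bounded operator remains bounded, reduces to the fact that $R_{A}$ is bounded below precisely when $A$ is bounded, together with the norm estimate on $\|R_{A}-R_{s}\|$ built into $d$.
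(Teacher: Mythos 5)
Your proof is correct, but it treats one of the two ingredients differently from the paper. The paper's proof is a two-line intersection argument: the set of regular Fredholm operators is open in $R(E)$ with respect to the gap metric by Theorem~\ref{gap2}, and the set of \emph{all} bounded adjointable operators is open in $R(E)$ by Theorem 3.4 of \cite{Gap}, so their intersection, the bounded adjointable Fredholm operators, is open. You use Theorem~\ref{gap2} in exactly the same way for the Fredholm half, but instead of citing \cite{Gap} for the boundedness half you prove the needed local statement directly: since $A\in L(E)$ gives $R_{A}\ge(1+\|A\|^{2})^{-1}I$, the bound $\|R_{A}-R_{s}\|\le d(s,A)$ built into the metric $d$ forces $R_{s}$ to be bounded below, hence invertible in $L(E)$, whence $D(s^{*}s)=E$, so $D(s)=E$ and $s$ is bounded adjointable (one clean way to see the last step is $s=F_{s}Q_{s}^{-1}$ with $Q_{s}=R_{s}^{1/2}$ invertible). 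Each step here is sound: the operator inequalities, the invertibility of a positive element bounded below, and the passage from full domain to membership in $L(E)$ for a regular operator are all standard, and the final rescaling between $g$ and $d$ is covered by the equivalence recorded before Theorem~\ref{gap}. What your route buys is self-containedness and an explicit radius $r=\min\{r_{0},\tfrac{1}{2}(1+\|A\|^{2})^{-1}\}$ in the $d$-metric; what it costs is length, since you are in effect reproving the local version of Sharifi's Theorem 3.4, which the paper simply quotes. One small point worth making explicit if you write this up: a bounded adjointable Fredholm operator is Fredholm as a regular operator in the sense of Definition 1.2 (a parametrix serves as pseudo left and right inverse), so Theorem~\ref{gap2} indeed applies to $A$.
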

\begin{proof}
 Recall that Theorem \ref{gap2} states that the space of regular Fredholm operators is open in $R(E)$ with respect to the gap metric. Moreover, by Theorem 3.4 of \cite{Gap} the space of all bounded adjointable operators on Hilbert $C^*$-module $E$ is open in $R(E)$ with respect to the gap metric. Therefore, the space of bounded adjointable Fredholm operators is open in $R(E)$ with respect to the gap metric, as desired.
 \end{proof}

We conclude this subsection by constructing some continuous paths of selfadjoint regular Fredholm operators with respect to the gap metric. 
\begin{theorem}\label{path1}
Let $t$ be a selfadjoint regular Fredholm operator on a Hilbert $C^*$-module $E$ and let $\lambda\longmapsto A_{\lambda}$ be a continuous path of selfadjoint bounded operators on $E$ with respect to the norm topology. Suppose that each $A_{\lambda}$ is  $t$-compact, then $\lambda \longmapsto t+A_{\lambda}$ is a continuous path of selfadjoint regular Fredholm operators with respect to the gap metric.
\end{theorem}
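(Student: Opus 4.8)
The plan is to split the assertion into two parts: that each $t+A_{\lambda}$ is a selfadjoint regular Fredholm operator, and that the assignment $\lambda\longmapsto t+A_{\lambda}$ is continuous for the gap metric. The first part is immediate from Corollary~\ref{compact2}: for each fixed $\lambda$, the operator $A_{\lambda}$ is a selfadjoint bounded $t$-compact operator and $t$ is a selfadjoint regular Fredholm operator, so $t+A_{\lambda}\in SR(E)$ and it is Fredholm. Hence $\lambda\longmapsto t+A_{\lambda}$ genuinely takes values in the set of selfadjoint regular Fredholm operators, and only the continuity remains.

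For the continuity I would reduce to norm continuity of the Cayley transforms. Since the gap metric $g$ is equivalent to the metric $d$ recalled above, and since Corollary~2.8 of \cite{Gap} gives $d(u,v)\le\frac{1}{2}\|C_{u}-C_{v}\|$ for $u,v\in SR(E)$, it is enough to show that $\lambda\longmapsto C_{t+A_{\lambda}}$ is norm continuous. Fix a parameter value $\mu$. I would apply Proposition~\ref{main2} with base point $t+A_{\mu}$ (which lies in $SR(E)$ by Corollary~\ref{compact2}) and with the selfadjoint bounded perturbation $D=A_{\lambda}-A_{\mu}$; since $(t+A_{\mu})+D=t+A_{\lambda}$, this yields
$$C_{t+A_{\lambda}}-C_{t+A_{\mu}}=(I-C_{t+A_{\mu}})(A_{\lambda}-A_{\mu})(t+A_{\lambda}+i)^{-1}.$$
Now $C_{t+A_{\mu}}$ is unitary, so $\|I-C_{t+A_{\mu}}\|\le 2$, and $\|(t+A_{\lambda}+i)^{-1}\|\le 2$ by the computation in Remark~\ref{norm} (using $\|(s+i)^{-1}\|=\|(s-i)^{-1}\|$ for a selfadjoint regular $s$). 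Consequently
$$\|C_{t+A_{\lambda}}-C_{t+A_{\mu}}\|\le 4\,\|A_{\lambda}-A_{\mu}\|,$$
which tends to $0$ as $\lambda\to\mu$ because $\lambda\longmapsto A_{\lambda}$ is norm continuous. Combining with the two inequalities above shows $d(t+A_{\lambda},t+A_{\mu})\to 0$, hence $g(t+A_{\lambda},t+A_{\mu})\to 0$, which is the required continuity at $\mu$; as $\mu$ was arbitrary, the path is gap-continuous.

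I do not anticipate a real obstacle, since the two substantial ingredients, Corollary~\ref{compact2} and the closed formula for the difference of Cayley transforms in Proposition~\ref{main2}, are already available. The only subtlety is that Proposition~\ref{main2} must be invoked with the \emph{moving} base point $t+A_{\mu}$, so that the error term is governed by $\|A_{\lambda}-A_{\mu}\|$ rather than by $\|A_{\lambda}\|$; the latter need not be small even for $\lambda$ close to $\mu$. The bounds $\|I-C_{t+A_{\mu}}\|\le 2$ and $\|(s+i)^{-1}\|\le 2$ are uniform in the base point, so the estimate survives as $\mu$ varies, which is all that is needed for pointwise continuity of the path.
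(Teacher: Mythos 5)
Your proposal is correct and follows essentially the same route as the paper: Corollary~\ref{compact2} to get that each $t+A_{\lambda}$ is a selfadjoint regular Fredholm operator, then Proposition~\ref{main2} applied at the moving base point $t+A_{\mu}$ with perturbation $A_{\lambda}-A_{\mu}$, giving $\|C_{t+A_{\lambda}}-C_{t+A_{\mu}}\|\leq 4\|A_{\lambda}-A_{\mu}\|$, and finally Corollary~2.8 of \cite{Gap} to translate this into gap-metric continuity. Your explicit justification of the uniform bounds $\|I-C_{t+A_{\mu}}\|\leq 2$ and $\|(t+A_{\lambda}+i)^{-1}\|\leq 2$ is in fact a slightly cleaner write-up of the paper's estimate.
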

\begin{proof}
Proposition \ref{compact2} implies that $\lambda \longmapsto t+A_{\lambda}$ is a path of selfadjoint regular Fredholm operators on Hilbert $C^*$-module $E$. By Proposition \ref{main2} we get that
$$\|C_{t+A_{\lambda}}-C_{t+A_{\mu}}\| \leq \|I-C_{t+A_{\mu}}\|\|A_{\lambda}-A_{\mu}\|\|(I+t+A_{\lambda})\|$$
$$\leq 4\|A_{\lambda}-A_{\mu}\|.$$
So Corollary 2.8 of \cite{Gap} shows that $\lambda \longmapsto t+A_{\lambda}$  is a continuous path of selfadjoint regular Fredholm operators with respect to the gap metric.
\end{proof}
\begin{theorem}
Let $t$ be a selfadjoint regular Fredholm operator on a Hilbert $C^*$-module $E$ and let $\varepsilon$ be the scaler as in Theorem \ref{bd1}. Suppose that $\lambda \longmapsto A_{\lambda}$ is a continuous path of selfadjoint bounded operators on $E$ with respect to the norm topology with $\|A_{\lambda}\|\leq \varepsilon$ for each $\lambda$. Then $\lambda \longmapsto t+ A_{\lambda}$ is a continuous path of selfadjoint regular Fredholm operators with respect to the gap metric.
\end{theorem}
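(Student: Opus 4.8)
The plan is to combine Theorem~\ref{bd1} with the Cayley-transform description of the gap metric. Since $\|A_\lambda\|\le\varepsilon$ for every $\lambda$, Theorem~\ref{bd1} immediately gives that $t+A_\lambda$ is a selfadjoint regular Fredholm operator for each $\lambda$; so the only thing left to prove is continuity of $\lambda\longmapsto t+A_\lambda$ with respect to the gap metric $g$.

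Because $g$ is equivalent to the metric $d$ recalled above, and because Corollary~2.8 of \cite{Gap} gives $d(s,s')\le\frac{1}{2}\|C_s-C_{s'}\|$ for selfadjoint regular $s,s'$, it suffices to show that $\lambda\longmapsto C_{t+A_\lambda}$ is norm continuous. To obtain this I would, for fixed $\lambda$ and $\mu$, view $t+A_\lambda=(t+A_\mu)+(A_\lambda-A_\mu)$ as the perturbation of the selfadjoint regular operator $t+A_\mu$ by the selfadjoint bounded operator $A_\lambda-A_\mu$, and apply Proposition~\ref{main2} with $t+A_\mu$ in the role of $t$ and $A_\lambda-A_\mu$ in the role of $D$. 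Since $(A_\lambda-A_\mu)+(t+A_\mu)+i=t+A_\lambda+i$, this yields
$$C_{t+A_\lambda}-C_{t+A_\mu}=(I-C_{t+A_\mu})(A_\lambda-A_\mu)(t+A_\lambda+i)^{-1}.$$

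Now I would estimate: $C_{t+A_\mu}$ is unitary, so $\|I-C_{t+A_\mu}\|\le 2$; and applying Remark~\ref{norm} to the selfadjoint regular operator $t+A_\lambda$ (equivalently, to $-(t+A_\lambda)$, which is again selfadjoint regular) gives $\|(t+A_\lambda+i)^{-1}\|\le 2$. Hence $\|C_{t+A_\lambda}-C_{t+A_\mu}\|\le 4\|A_\lambda-A_\mu\|$, and the norm continuity of $\lambda\longmapsto A_\lambda$ forces the norm continuity of $\lambda\longmapsto C_{t+A_\lambda}$, and therefore the $g$-continuity of $\lambda\longmapsto t+A_\lambda$. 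I do not anticipate a genuine obstacle here; the argument is essentially that of Theorem~\ref{path1}, with the $t$-compactness hypothesis there replaced by the norm bound $\|A_\lambda\|\le\varepsilon$. The only points needing care are the bookkeeping in applying Proposition~\ref{main2} (one perturbs $t+A_\mu$, not $t$, and must check that the resolvent factor collapses to $(t+A_\lambda+i)^{-1}$) and recording the uniform bound $\|(s+i)^{-1}\|\le 2$ for selfadjoint regular $s$ coming from Remark~\ref{norm}.
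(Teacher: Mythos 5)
Your proposal is correct and follows essentially the same route as the paper: Theorem \ref{bd1} gives Fredholmness of each $t+A_\lambda$, and the continuity is obtained exactly as in the proof of Theorem \ref{path1}, by applying Proposition \ref{main2} to $t+A_\mu$ perturbed by $A_\lambda-A_\mu$ and bounding $\|C_{t+A_\lambda}-C_{t+A_\mu}\|\leq 4\|A_\lambda-A_\mu\|$ before invoking Corollary 2.8 of \cite{Gap}. Your bookkeeping (the resolvent factor collapsing to $(t+A_\lambda+i)^{-1}$ and the uniform bound from Remark \ref{norm}) is in fact slightly more carefully recorded than in the paper's sketch.
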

\begin{proof}
 It follows from Theorem \ref{bd1} that $\lambda \longmapsto A_{\lambda}+t$ is a path of selfadjoint regular Fredholm operators. By a similar argument given in the proof of Theorem \ref{path1}, one can conclude the continuity of the path with respect to the gap metric
\end{proof}

{\textbf{Acknowledgements}}

 Part of this work was done during author's stay in International School for Advanced Studies (SISSA), Trieste, Italy, 2012. This work was supported by a grant from IPM  and  by SISSA post-graduate fellowship. 
\vskip 0.4 true cm

\bibliographystyle{amsplain}

\end{document}